\DeclarePairedDelimiter\ceil{\lceil}{\rceil}
\newtheorem{theorem}{Theorem}[section]
\newtheorem{corollary}{Corollary}[theorem]
\newtheorem{lemma}[theorem]{Lemma}
\title{On the Ramsey Numbers of Odd-Linked Double Stars}
\author{
  Chaitanya D. Karamchedu \\
  Harvey Mudd College\\
  Claremont, CA, 91711 \\
  \texttt{ckaramchedu@hmc.edu} \\
  \And
  Maria M. Klawe \\
  Harvey Mudd College\\
  Claremont, CA, 91711 \\
 	\texttt{klawe@hmc.edu} \\
}
\begin{document}
\maketitle

\begin{abstract}
The linked double star $S_c(n,m)$, where $n \geq m \geq 0$, is the graph consisting of the union of two stars $K_{1,n}$ and $K_{1,m}$ with a path on $c$ vertices joining the centers. Its ramsey number $r(S_c(n,m))$ is the smallest integer $r$ such that every $2$-coloring of the edges of a $K_r$ admits a monochromatic $S_c(n,m)$. In this paper, we study the ramsey numbers of linked double stars when $c$ is odd. In particular, we establish bounds on the value of $r(S_c(n,m))$ and determine the exact value of $r(S_c(n,m))$ if $n \geq c$, or if $n \leq \lfloor \frac{c}{2} \rfloor - 2$ and $m = 2$.

\end{abstract}

\section{Introduction} \label{Introduction}
For any graph $G$, the \textit{ramsey number} $r(G)$ is the smallest positive integer $r$ such that any $2$-coloring of the edges of the complete graph on $r$ vertices ($K_r$) contains a monochromatic subgraph isomorphic to $G$. Famously, the exact determination of ramsey numbers is an exceedingly difficult computational problem that becomes intractable beyond small cases. As such, the ramsey numbers of various classes of graphs must be determined analytically, and some (such as the ramsey numbers of complete graphs, $r(K_n)$) have proved to be notoriously difficult to determine; as such, there are many open or partially open problems. However, the ramsey numbers of many classes of simple graphs (including stars, paths, and cycles) have been determined exactly: see \cite{Radziszowski} for a survey of known results of ramsey numbers as of January 2021.

In 1979, Grossman, Harary, and Klawe investigated the ramsey numbers of one such family of graphs known as double stars (\cite{GHK}). A \textit{double star} is defined as the union of two disjoint stars with an edge joining their centers. Grossman et al. solved the ramsey numbers of double stars in most cases, and more recently Norin, Sun, and Zhao have made some progress in the remaining open cases for double stars (\cite{Norin}). See Section \ref{Unsolved Problems} for more information. Motivated by the results of Grossman et al. and Norin et al., here we investigate the ramsey numbers of a generalization of double stars, which we refer to as \textit{linked double stars}.

\begin{figure}[ht]
\centering
\begin{center}
\begin{tikzpicture}[transform shape,scale = 0.5, every node/.style={scale=0.7}]
    \node[circle,fill,inner sep=0.3cm] (center) at (0,0) {};
\foreach \phi in {1,...,7}{
    \node[circle,fill,inner sep=0.3cm]  (v_\phi) at (360/8  * \phi:3cm) {};
         \draw[black] (v_\phi) -- (center);
      }
    \node[circle,fill,inner sep=0.3cm] (center2) at (3,0) {};
    \node[circle,fill,inner sep=0.3cm] (center3) at (7,0) {};
    \node[circle,fill,inner sep=0.3cm] (center5) at (14,0) {};
    \draw[black] (center) -- (center2);
    \foreach \phi in {1,...,5}{
    \node[shift = {(20,0)}, circle,fill,inner sep=0.3cm]  (v1_\phi) at (36 + 360/5  * \phi:3cm) {};
         \draw[black] (v1_\phi) -- (center5);
      }
    \draw[black] (center2) -- (v1_2);
   \end{tikzpicture} 
\end{center}
\caption{Example of a $S_5(7,4)$}
\end{figure}
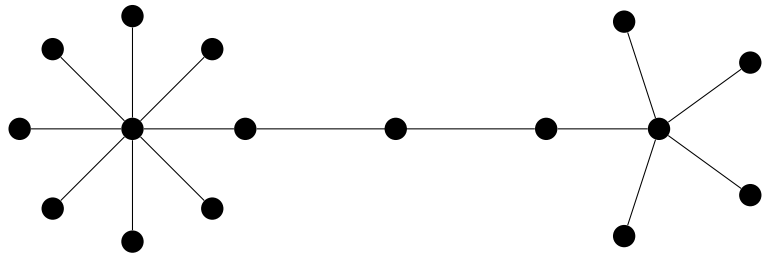

We define a linked double star as the union of two disjoint stars with a \textit{path} joining their centers. In particular, for $n \geq m \geq 0$, the \textit{linked double star} $S_c(n,m)$ is the graph on the points $\{ a_1, \ldots, a_c, v_1, \ldots, v_n, w_1, \ldots w_m  \}$ with the edges

$$\{ (a_1, v_i), (a_1, a_2), (a_2, a_3), \ldots, (a_{c-1}, a_c), (a_c, w_j) : 1 \leq i \leq n, 1 \leq j \leq m\}.$$ Note that above, $a_1$ and $a_c$ are the centers of the $n$-star and $m$-star respectively, and the path from $a_1$ to $a_c$ is the link between the two stars. For reference, Figure 1 shows an example of a $S_5(7,4)$.

In this paper, we will primarily be studying the ramsey numbers of \textit{odd-linked double stars}. Given the above definition of a linked double star, an odd-linked double star simply refers to a linked double star wherein the link has an odd number of vertices (i.e. $c = 2p + 1$ for some $p \in \mathbb{N}$).

We exactly determine the ramsey numbers of two groups of odd-linked double stars. In particular, if $c = 2p + 1$, we find that \begin{enumerate}
    \item $r(S_c(n,m)) = 2(n + m) + c - 2$ if $n \geq c$
    \item $r(S_c(n,2)) = n + 3p + 3$ if $n \leq p-2$. 
\end{enumerate}

In Section 2, we establish lower bounds on the ramsey numbers of odd-linked double stars, which hold irrespective of the relative size of the stars and the link. In Section 3, we establish upper bounds, which in the particular cases of the sizes of $n$ and $m$ described above, happen to be precisely equal to the lower bounds determined in Section 2 (thus determining the ramsey numbers exactly). Finally, in Section 4, we list open problems suggested by this work.

\section{Lower bounds} \label{Lower bounds}

In this section, we will establish lower bounds on the ramsey numbers of all odd-linked double stars. From this point forth, we will assume (and use interchangeably) $c = 2p + 1$ for some $p \in \mathbb{N}$, where again $c$ is the number of vertices on the link. The general statement of the lower bounds are presented in Theorem 2.1, the proof of which will be provided by applying the subsequent lemmas.

\begin{theorem}\label{lower bounds}
The ramsey numbers of the odd-linked double stars satisfy

$$r(S_c(n,m)) \geq \begin{cases}
2(n + m + p) - 1 &\mbox{for } n + m \geq p + 2 \\
n + m + 3p + 1 &\mbox{for } n + m \leq p + 2
\end{cases}.$$
\end{theorem}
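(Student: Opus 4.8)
The plan is to prove both inequalities by exhibiting, for each claimed value $N$, an explicit red/blue coloring of $K_{N-1}$ that contains no monochromatic $S_c(n,m)$; this certifies $r(S_c(n,m)) \ge N$. Since the theorem is really the statement $r(S_c(n,m)) \ge \max\{2(n+m+p)-1,\ n+m+3p+1\}$, with the two expressions crossing over exactly at $n+m = p+2$, I would isolate the two constructions as separate lemmas and then combine them: one construction always rules out a monochromatic copy on $2(n+m+p)-2$ vertices, the other always rules one out on $n+m+3p$ vertices, and a one-line comparison of the two quantities shows which dominates in each regime.

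For the first bound I would split the $2(n+m+p)-2$ vertices into two equal blocks of size $s := n+m+p-1$, color all edges inside each block red, and all edges between the blocks blue. The red graph is then two disjoint copies of $K_s$, and since $|V(S_c(n,m))| = c+n+m = n+m+2p+1 > s$, neither red clique is large enough to host a copy, so red is clear by a pure vertex count. The blue graph is the complete bipartite graph $K_{s,s}$, so the obstruction here is the size of the bipartition classes of $S_c(n,m)$ itself. I would two-color the tree by parity along the spine $a_1,\dots,a_{2p+1}$: the odd-indexed spine vertices form one class of size $p+1$, while the even-indexed spine vertices together with all $n+m$ leaves form the other class of size $n+m+p$. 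Because $S_c(n,m)$ is connected its bipartition is forced, so an embedding into $K_{s,s}$ would need both classes to have size at most $s = n+m+p-1$; the larger class, of size $n+m+p$, just fails to fit, and no blue copy exists.

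For the second bound I would split the $n+m+3p$ vertices into a small part $A$ of size $p$ and a large part $B$ of size $n+m+2p$, color all edges inside $B$ red and all remaining edges (inside $A$ and across) blue, so that the blue graph is the complete split graph with clique $A$ and independent set $B$. Again red is clear by vertex count, since $|B| = n+m+2p = |V(S_c(n,m))|-1$. The heart of the argument is blue: if $S_c(n,m)$ embedded in the split graph, then since $B$ is blue-independent the preimage of $A$ would have to be a vertex cover of $S_c(n,m)$, forcing the vertex cover number to satisfy $\tau(S_c(n,m)) \le |A| = p$. I would then show $\tau(S_c(n,m)) = p+1$. Since a tree is bipartite, K\"onig's theorem identifies the vertex cover number with the maximum matching number; matching $a_1$ and $a_{2p+1}$ each to one of their leaves and adding a maximum matching of size $p-1$ on the internal path $a_2,\dots,a_{2p}$ produces a matching of size $p+1$, while the parallel choice of endpoints yields a vertex cover of the same size, so $\tau(S_c(n,m)) = p+1$. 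As $p+1 > p = |A|$, no blue copy can exist.

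The main obstacle I anticipate is the blue analysis in the second construction: getting the vertex-cover reduction exactly right and pinning down $\tau(S_c(n,m)) = p+1$, since the bound is tight precisely because this number exceeds $|A|$ by exactly one. I would also need to dispose of a few boundary cases — for instance $m=0$, where the same matching count still yields $\tau = p+1$, and the degenerate $n=m=0$ case, where $S_c(n,m)$ collapses to a bare path and must be handled separately. The first construction, by contrast, I expect to be routine once the bipartition sizes are recorded. Finally, comparing $2(n+m+p)-1$ with $n+m+3p+1$, whose difference is $p+2-(n+m)$, confirms that the bipartite construction is the stronger certificate exactly when $n+m \ge p+2$ and the split-graph construction is stronger exactly when $n+m \le p+2$, which is the case split in the statement.
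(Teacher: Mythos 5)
Your proposal is correct and, for the first bound, identical to the paper's: the paper also two-colors $K_{2(n+m+p)-2}$ with red $= K_{n+m+p-1}\cup K_{n+m+p-1}$ and blue the complete bipartite graph, killing red by the vertex count $|V(S_c(n,m))| = n+m+2p+1$ and blue by observing that the larger bipartition class of $S_c(n,m)$ has $n+m+p$ vertices. For the second bound your construction differs slightly: the paper colors the edges inside the small part red as well, so its red graph is $K_p \cup K_{n+m+2p}$ and its blue graph is the complete bipartite graph $K(p,\,n+m+2p)$, which it defeats by the same bipartition-class count (one class of size $n+m+p$, the other of size $p+1$, neither fitting in the side of size $p$); you instead color those edges blue, making the blue graph a complete split graph, and defeat it with the vertex-cover obstruction $\tau(S_c(n,m)) = p+1 > p$ via K\"onig. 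Both analyses are valid and of comparable length; your vertex-cover route is marginally more robust in that it does not care how the edges inside the small part are colored, while the paper's bipartite formulation lets it reuse the identical sentence from the first lemma. Your attention to the degenerate case $n=m=0$ is warranted and is in fact a point where you are more careful than the paper: there $S_c(0,0)$ is just the path $P_{2p+1}$, its smaller bipartition class has only $p$ vertices (equivalently $\tau = p$), both constructions admit a blue copy, and indeed $r(P_{2p+1}) = 3p < 3p+1$, so the second bound as stated silently requires $n \geq 1$ --- an hypothesis the paper only makes explicit later, in Section 3.
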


\begin{lemma}\label{lower bound 1}
$r(S_c(n,m)) \geq 2(n + m + p) - 1$.
\end{lemma}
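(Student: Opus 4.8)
The plan is to prove this lower bound in the standard way: by exhibiting an explicit red/blue coloring of the complete graph on $2(n+m+p)-2$ vertices that contains no monochromatic copy of $S_c(n,m)$. Such a coloring shows $r(S_c(n,m)) > 2(n+m+p)-2$, which is exactly the claimed bound. Concretely, I would set $k := n+m+p-1$, partition the $2k$ vertices into two sets $X$ and $Y$ of size $k$ each, color every edge lying inside $X$ or inside $Y$ red, and color every edge between $X$ and $Y$ blue. Then the red graph is a disjoint union of two copies of $K_k$, and the blue graph is the complete bipartite graph $K_{k,k}$.

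For the red graph I would argue by a vertex count. Since $S_c(n,m)$ is connected, any red copy must lie entirely within one of the two cliques $K_k$. But $S_c(n,m)$ has $n+m+c = n+m+2p+1$ vertices, whereas $k = n+m+p-1 < n+m+2p+1$, so it cannot fit inside a single $K_k$. Hence the red graph contains no $S_c(n,m)$.

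For the blue graph, the key observation is that $S_c(n,m)$ is a tree, hence bipartite, and (being connected) has a unique bipartition up to swapping the two parts. I would compute this bipartition explicitly: along the path $a_1 a_2 \cdots a_c$ the vertices alternate sides, and because $c = 2p+1$ is odd, both endpoints $a_1$ and $a_c$ — the two star centers — land on the same side. Consequently all $n+m$ leaves land on the opposite side, which therefore has size $p+n+m$, while the side containing $a_1, a_3, \ldots, a_c$ has size $p+1$. Any embedding of the connected bipartite graph $S_c(n,m)$ into $K_{k,k}$ must send each part of its bipartition entirely into one side of $K_{k,k}$, so it requires a side of size at least $n+m+p$. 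Since $k = n+m+p-1 < n+m+p$, no such embedding exists, and the blue graph contains no $S_c(n,m)$ either.

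The main obstacle is the blue (bipartite) case, and specifically pinning down the bipartition correctly: the whole argument hinges on the parity of $c$ forcing the two centers onto the same side, so that the larger part has size $n+m+p$ rather than something more balanced. I would take care to justify that connectedness makes the bipartition essentially unique, so that an embedding into $K_{k,k}$ is genuinely forced to respect these part sizes. Note that for an \emph{even} link this argument fails and the resulting bound would change, which is consistent with the paper restricting attention to the odd case.
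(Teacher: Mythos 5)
Your proposal is correct and uses exactly the same construction as the paper: two red cliques of size $n+m+p-1$ with a blue complete bipartite graph between them, ruling out a red copy by connectedness and vertex count and a blue copy by the fact that the larger part of the (unique) bipartition of $S_c(n,m)$ has $n+m+p$ vertices. Your explicit verification that the odd link forces both star centers onto the same side, making the larger part have size $n+m+p$, is a welcome elaboration of a step the paper states without detail.
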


\begin{proof}
Consider a $2$-coloring of $K_{2(n + m + p) - 2}$, where the red subgraph consists of $K_{n + m + p - 1} \cup K_{n + m + p - 1}$, so that the blue subgraph is the complete bipartite graph $K(n + m + p - 1, n + m + p - 1)$. It is easy to see that there is no red $S_c(n,m)$, since $S_c(n,m)$ is connected and has $n + m + c$ vertices. But there also cannot be a blue $S_c(n,m)$, since when a $S_c(n,m)$ is presented as a subgraph of a bipartite graph there must be $n + m + p$ vertices that belong to the same partition.
\end{proof}

\begin{lemma}\label{lower bound 2}
$r(S_c(n,m)) \geq n + m + 3p + 1$.
\end{lemma}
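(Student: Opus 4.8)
The plan is to prove the bound by exhibiting an explicit $2$-coloring of $K_{n+m+3p}$ that contains no monochromatic $S_c(n,m)$; this shows $r(S_c(n,m)) > n+m+3p$ and hence gives the claimed inequality. The construction mirrors that of Lemma~\ref{lower bound 1}, but with the two red cliques made as unbalanced as possible. Specifically, I would partition the $n+m+3p$ vertices into a set $A$ with $|A| = p$ and a set $B$ with $|B| = n+m+2p$, color every edge inside $A$ or inside $B$ red, and color every edge between $A$ and $B$ blue. Thus the red graph is $K_p \cup K_{n+m+2p}$ and the blue graph is the complete bipartite graph $K(p,\,n+m+2p)$.

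For the red graph the argument is the same size-and-connectivity count as before: $S_c(n,m)$ is connected on $n+m+c = n+m+2p+1$ vertices, so any red copy would have to lie inside a single red component. Since the larger component has only $n+m+2p$ vertices (exactly one short) and the smaller has $p$, no red $S_c(n,m)$ can occur.

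The crux is the blue graph. Here I would use that $S_c(n,m)$ is a tree, so it has a unique bipartition, together with the standard fact that a tree embeds into a complete bipartite graph $K(a,b)$ with $a \le b$ exactly when its two color classes have sizes at most $a$ and $b$ respectively. The key computation is the bipartition of $S_c(n,m)$: because the link $a_1 a_2 \cdots a_c$ has $c-1 = 2p$ edges (an even number), the two centers $a_1$ and $a_c$ fall into the \emph{same} class. That class is $\{a_1, a_3, \ldots, a_{2p+1}\}$, of size $p+1$, while the other class consists of the remaining $p$ interior path vertices together with all $n+m$ leaves, of size $n+m+p$. So the class sizes of $S_c(n,m)$ are $p+1$ and $n+m+p$, and (assuming $n+m \ge 1$) its smaller class has size $p+1$. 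Since the smaller side of the blue graph has only $p < p+1$ vertices, $S_c(n,m)$ cannot be embedded, so there is no blue copy either.

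The main obstacle is really the blue analysis, and within it the point that makes everything fit is the \emph{oddness} of $c$: an even number of link edges forces both centers into the same bipartition class, which is what pins the smaller class size at exactly $p+1$ and lets a blue side of size merely $p$ block the embedding. I would also flag the degenerate case $n+m=0$ (where $S_c(n,m)$ is the bare path $P_{2p+1}$, whose smaller class has size $p$ rather than $p+1$) as needing the hypothesis $n+m \ge 1$, since there the path does embed into $K(p,2p)$ and the bound as stated would fail.
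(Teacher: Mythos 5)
Your proposal is correct and uses exactly the same coloring as the paper ($K_p \cup K_{n+m+2p}$ in red, so that the blue graph is $K(p,\,n+m+2p)$), with the same connectivity count for the red graph and the same bipartition-size argument for the blue graph; you simply spell out the class sizes $p+1$ and $n+m+p$ that the paper asserts without computation. Your caveat that the statement needs $n+m \geq 1$ (since $S_c(0,0) = P_c$ has classes of sizes $p+1$ and $p$ and does embed in $K(p,2p)$) is a valid edge case that the paper leaves implicit.
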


\begin{proof}
Consider a $2$-coloring of $K_{n +m +3p}$, where the red subgraph consists of $K_{p} \cup K_{n + m + 2p}$, so that the blue subgraph is the complete bipartite graph $K(p, n + m + 2p)$. It is easy to see that there is no red $S_c(n,m)$ since $S_c(n,m)$ is connected and has $n + m + c$ vertices. But there also cannot be a blue $S_c(n,m)$, since when a $S_c(n,m)$ is presented as subgraph of a bipartite graph there must be $n + m + p$ vertices that belong to one partition, and at least $p+1$ vertices belonging to the other partition.
\end{proof}

Combining the results of Lemmas \ref{lower bound 1} and \ref{lower bound 2}, the proof of Theorem \ref{lower bounds} is completed.

\section{Upper bounds} \label{Upper bounds}
In this section, we will establish upper bounds on the ramsey numbers of odd-linked double stars.

We begin, as before, by stating the main theorem(s), and then prove the result(s) by a series of lemmas which together imply the desired result(s).

\begin{theorem}\label{upper bound 1}
If $n \geq 2p + 1$, the ramsey numbers of odd-linked double stars satisfy $$r(S_c(n,m)) \leq 2(n + m + p) - 1.$$
\end{theorem}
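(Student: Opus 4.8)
The plan is to argue by cases according to how ``spread out'' the two colour classes are, exploiting the fact that $S_c(n,m)$ is a tree whose two bipartition classes have sizes $p+1$ and $n+m+p$. Indeed, placing $a_1$ in one class forces the odd-indexed spine vertices $a_1,a_3,\dots,a_c$ (there are exactly $p+1$ of them, since $c=2p+1$) into that class, and everything else---the $p$ even-indexed spine vertices together with all $n+m$ leaves---into the other. Consequently \emph{any} complete bipartite graph $K(s,t)$ with $\min(s,t)\ge p+1$ and $\max(s,t)\ge n+m+p$ already contains a monochromatic $S_c(n,m)$ in the colour carrying those edges. Writing $N=2(n+m+p)-1$ and $L=n+m+p-1$, the whole problem thus reduces to showing that at least one colour class contains such a large complete bipartite subgraph.

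First I would dispose of the \emph{dispersed} regime, where every connected component of the red graph has at most $L$ vertices. Here I would produce a union $A$ of red components with $p+1\le |A|\le L$: if some single red component already has size in $[p+1,L]$ take it, and otherwise every red component has size at most $p$ (as all are at most $L$), so greedily accumulating components in partial sums of step size $\le p$ first reaches the interval at a value $\le 2p\le L$, using $p\le n+m-1$, which holds since $n\ge 2p+1$. For such an $A$ all edges between $A$ and $V\setminus A$ are blue, $|V\setminus A|\ge N-L=n+m+p$, and $|A|\ge p+1$, so the blue $K(|A|,|V\setminus A|)$ contains a blue $S_c(n,m)$. By the symmetric statement we may therefore assume that some colour, say red, has a component $C$ with $|C|\ge n+m+p$.

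The remaining \emph{concentrated} regime is where I expect the real work. Starting from the elementary bound $\Delta_R+\Delta_B\ge N-1$, I would first observe that if $\Delta_R$ is small, blue is forced to win outright: when $\Delta_R\le n+m-2$ we have $\delta_B\ge N-1-\Delta_R\ge n+m+2p$, which is $|V(S_c(n,m))|-1$, so a greedy (BFS-order) tree embedding places a blue $S_c(n,m)$. Hence I may assume a red vertex $x$ with red degree at least $n+m-1$, which I take as the big-star centre $a_1$. It then remains to route a red spine $a_1,a_2,\dots,a_c$ of $2p$ edges through the large component $C$ to a far endpoint $a_c$ whose red degree is at least $m+1$ (a mild requirement), and to allocate $n$ red leaves at $a_1$ and $m$ red leaves at $a_c$ from the remaining vertices. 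The hypothesis $n\ge 2p+1=c$ is exactly what makes this feasible: the spine is short relative to the big star, and the total slack $N-(n+m+c)=n+m-2$ leaves room to absorb the spine and both leaf-sets.

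The main obstacle, and where I would concentrate the detailed argument, is guaranteeing \emph{simultaneously} that the far endpoint $a_c$ carries enough red degree and that the $n$ and $m$ leaves can be chosen without colliding with the spine or with each other. A naive choice can erode the available red neighbours of $a_1$ below $n$ once the spine and the leaves of $a_c$ are deleted, so I expect to need a longest-red-path (Pósa-type rotation) argument inside $C$---selecting the spine as a sub-path of a maximum red path whose endpoints have all their red neighbours confined to the path---together with careful accounting against the $n+m-2$ slack, and a separate check of the degenerate small-$m$ cases (e.g.\ $m\in\{0,1\}$) where the degree thresholds above are tight.
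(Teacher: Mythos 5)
Your two reductions are sound: the bipartition observation (classes of size $p+1$ and $n+m+p$, matching the paper's lower-bound constructions), the union-of-small-red-components argument producing a blue complete bipartite graph, and the greedy tree embedding when $\delta_B \geq n+m+2p$ are all correct. But they only eliminate the extreme configurations, and the proposal stops exactly where the theorem's actual content begins. Knowing that red has a component on $\geq n+m+p$ vertices and a vertex of red degree $\geq n+m-1$ is nowhere near enough to embed a red $S_c(n,m)$, and you concede as much: the "longest-red-path (P\'osa-type rotation) argument \ldots together with careful accounting" is named but never carried out. There is no reason to believe a rotation argument alone closes this case --- the difficulty is not merely routing a spine of length $2p$, but doing so while simultaneously reserving $n$ red neighbours at one end and $m$ at the other inside a graph whose red structure you have barely constrained; and crucially, when the red spine-plus-leaves construction fails, the win must come from a \emph{blue} $S_c(n,m)$, which your sketch has no mechanism to produce in this regime. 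As written, the proposal is an honest outline with the central case missing, not a proof.

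For comparison, the paper attacks precisely this middle regime by fixing a vertex $u$ of \emph{maximum} monochromatic degree (so $|R(u)| = n+m+p-1+k$ with $k \geq 0$, a much stronger anchor than red degree $n+m-1$) and splitting on $k$. For $k \leq p$ it inducts on $m$, with the broom Ramsey numbers of Yu and Li as the base case; for larger $k$ it uses Jackson's theorem on cycles in bipartite graphs, the cycle Ramsey numbers, and the path-versus-star Ramsey numbers to manufacture a monochromatic cycle of length $2p$ or $2p+2$ near $A = R(u)$, which then serves as the spine, with the failure of each red construction forcing enough blue degree to complete a blue $S_c(n,m)$ instead. Some such two-sided mechanism is what your concentrated case needs and lacks.
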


\begin{theorem}\label{upper bound 2}
If $n \leq p-2$, the ramsey numbers of odd-linked double stars satisfy $$r(S_c(n,2)) \leq n + 3p + 3.$$
\end{theorem}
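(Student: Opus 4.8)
The plan is to argue by contradiction: assume a $2$-coloring of $K_N$ with $N = n + 3p + 3$ has no monochromatic $S_c(n,2)$, where $c = 2p+1$, and derive a contradiction. The key preliminary observation is that $S_c(n,2)$ is bipartite, and its bipartition has parts of sizes $p+1$ (the vertices $a_1, a_3, \ldots, a_{2p+1}$) and $n + p + 2$ (the vertices $a_2, a_4, \ldots, a_{2p}$ together with all $n+2$ leaves). Since $n \leq p - 2$, one checks $\lfloor N/2 \rfloor \geq n + p + 2$, which is the arithmetic that makes the bound tight.

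First I would reduce via monochromatic components. Let $C$ be a largest red component, of order $c_1$. If $c_1 \leq n + 2p + 2$, then $C$ is too small to hold a red $S_c(n,2)$, and, crucially, every edge leaving $C$ is blue, so the blue graph contains $K(c_1, N - c_1)$ with $N - c_1 \geq p+1$. I would embed $S_c(n,2)$ into this blue complete bipartite graph along its bipartition, placing the size-$(p+1)$ side in the smaller part and the size-$(n+p+2)$ side in the larger part. This succeeds whenever $\min(c_1, N-c_1) \geq p+1$ and $\max(c_1, N - c_1) \geq n + p + 2$; the latter always holds since $\max \geq \lceil N/2\rceil \geq n+p+2$, and the former holds as soon as $c_1 \geq p+1$. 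The residual case $c_1 \leq p$ (all red components tiny) is easier still: the blue graph is then complete multipartite with all parts of order at most $p \leq \lceil N/2 \rceil$, hence has a Hamiltonian path, and the $n+p+2$ vertices beyond a chosen $(2p+1)$-subpath supply more than enough blue leaves at each end. By the red/blue symmetry of the hypothesis, the same reasoning shows that if the blue graph has no component of order $\geq n+2p+3$, then red contains $S_c(n,2)$. Thus I may assume both color classes have a giant component of order at least $n + 2p + 3$.

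The crux is the giant-component case. Let $C_R$ and $C_B$ be giant red and blue components; since each has order exceeding $N/2$, they overlap in a set $W$ with $|W| \geq 2(n+2p+3) - N = n + p + 3$. Working inside $C_R$, I would take a longest red path $Q = x_1 \cdots x_L$ and apply the standard longest-path/rotation machinery: all red-neighbors of an endpoint lie on $Q$, and red edges from $x_1$ to interior vertices yield alternative longest paths with new endpoints. The target is a sub-path on exactly $2p+1$ red vertices, one end of which has at least $n$ red-neighbors and the other at least $2$, all lying outside the $2p+1$ chosen vertices, with the $\geq p$ leftover vertices then allocated as the pendant leaves. Because $C_R$ is connected and large while $C_B$ being giant simultaneously prevents red from being globally too sparse, I expect a contradiction with the assumption that no red $S_c(n,2)$ exists.

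The main obstacle is precisely this last conversion: turning a large connected monochromatic component (with the opposite color connected on most vertices) into the specific long-handled broom $S_c(n,2)$, i.e.\ guaranteeing a monochromatic path of the exact length $2p+1$ carrying $n$ leaves at one end and $2$ at the other. The rigidity of the endpoints of a longest path (which have no neighbors off the path) is the chief difficulty; I anticipate resolving it either by a rotation argument that relocates the needed high-degree center into the path's interior, or by a degree dichotomy in which endpoint degrees too small to host leaves force enough blue edges to build the double star in blue instead. Checking that the $\leq p$ leftover vertices always suffice as leaves---which is exactly where $n \leq p-2$ is used---should then be a routine count.
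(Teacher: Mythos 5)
Your opening reduction is sound: the bipartition count for $S_c(n,2)$ (parts of sizes $p+1$ and $n+p+2$), the observation that a largest red component of order $c_1\le n+2p+2$ forces a blue $K(c_1,N-c_1)$ into which $S_c(n,2)$ embeds once $c_1\ge p+1$, and the Hamiltonian-path argument in the complete multipartite blue graph when $c_1\le p$ all check out, and the inequalities do use $n\le p-2$ in the right places. So you may legitimately reduce to the case where both colors have a component on at least $n+2p+3$ vertices. But that is exactly where the proposal stops being a proof: the entire content of the theorem lives in this giant-component case, and there you offer only a plan (``I expect a contradiction,'' ``I anticipate resolving it either by\dots or by\dots''). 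Nothing in the proposal actually produces a monochromatic path on exactly $2p+1$ vertices whose two endpoints retain $n$ and $2$ same-colored neighbors off the path. A longest red path is the wrong object for this: by maximality its endpoints have \emph{all} their red neighbors on the path, so they are precisely the vertices least able to host pendant leaves, and truncating to a $(2p+1)$-vertex subpath destroys the only degree information that maximality provides; P\'osa rotation manufactures new endpoints of longest paths, not subpaths of prescribed length with degree guarantees. The remark that a giant blue component ``prevents red from being globally too sparse'' is not a usable constraint either --- both hypotheses are compatible with the red graph being a very sparse connected graph.

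For comparison, the paper resolves exactly this difficulty by working with monochromatic cycles of length $2p+2$ rather than with long paths: such a cycle exists since $R(C_{2p+2},C_{2p+2})=3p+2\le n+3p+3$, and a $(2p+2)$-cycle contains a path on $2p+1$ vertices between many different pairs of its vertices --- precisely the flexibility in choosing endpoints that a longest path lacks. The paper then fixes the cycle $Q$ maximizing the cycle-degree of its best external vertex, and whenever the required stars cannot be attached it applies Jackson's theorem on cycles in bipartite graphs to manufacture a new $(2p+2)$-cycle contradicting that maximality. If you want to complete your route you will need a replacement for that machinery; as written, the crux case is unproven.
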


\subsection*{Proof of Theorem \ref{upper bound 1}}

We will first introduce some notation which will be used throughout this section, the approach taken here is largely motivated by the strategy used by Grossman et al. in \cite{GHK}. Consider some fixed $2$-coloring of a complete graph $K$, and let $V$ denote the vertex-set of $K$. For a particular vertex $v \in V$, let $R(v)$ and $B(v)$ denote the sets of vertices joined to $v$ by a red and blue edge respectively. In this section, we may often use red-$d(v)$ to denote $|R(v)|$ and similarly for blue-$d(v)$. For a particular subset of points, $W \subseteq V$, define red-$d_W(v)$ as $$\text{red-}d_W(v) = |R(v) \cap W|$$ and similarly for blue-$d_W(v)$. We will also consider a particular vertex $u \in V$, where $u$ is chosen to have maximum monochromatic degree. Thus we may assume without loss of generality that $\text{red-}d(u) \geq \text{red-}d(v)$ and $\text{red-}d(u) \geq \text{blue-}d(v)$ for all $v \in V$. For convenience, we will define $A = R(u)$ and $B = B(u)$, and write $$|A| = n + m + p - 1 + k$$ for some integer $k \geq 0$. Finally, unless explicitly stated otherwise, we shall assume that $p \geq 1$ and $m \geq 1$.

We will now provide a series of lemmas that will be used repeatedly in this section.

\begin{lemma}\label{Yu_Li}(\textbf{Due to Yu and Li}, see \cite{Yu and Li})
A broom graph, $B(n,c)$, is equivalent to $S_c(n,0)$. The ramsey numbers of brooms are

$$r(B(n,c)) = 
\begin{dcases}
	n + c + \ceil*{\frac{c}{2}} - 1 & \text{if }c \geq 2n - 1 \\
	2(n + c) - 2\ceil*{\frac{c}{2}} - 1 & \text{if } 4\leq c \leq 2n - 2
\end{dcases}.$$ Note that if $n = 1$, or $1 \leq c \leq 2$, then the ramsey numbers of the corresponding brooms can be solved exactly by the ramsey numbers of paths and stars. See \cite{Yu and Li} for more information. If $c = 3$, then since $B(n,3) = S_{2}(n,1)$, the ramsey numbers of the corresponding brooms are solved by the results of Grossman et al. See \cite{GHK} for more information.

\end{lemma}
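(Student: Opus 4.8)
This statement is attributed to Yu and Li (\cite{Yu and Li}), so the plan is to sketch a reproof rather than claim originality. Throughout I would use that $B(n,c) = S_c(n,0)$ is a path $a_1 a_2 \cdots a_c$ with $n$ extra leaves hung at the endpoint $a_1$; it therefore has $n+c$ vertices and bipartition classes of sizes $\lceil \frac{c}{2} \rceil$ and $n + \lfloor \frac{c}{2} \rfloor$. The single embedding criterion I would lean on is elementary: a monochromatic subgraph $G$ contains $B(n,c)$ as soon as it contains a path $a_1 \cdots a_c$ whose endpoint $a_1$ retains at least $n$ neighbors in $G$ \emph{off} that path. I would also treat $c \geq 4$ only, since the lemma defers $c \leq 2$ to stars and paths and $c = 3$ to \cite{GHK}.

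For the lower bounds the plan is to exhibit, in each regime, a $2$-coloring of $K_{r-1}$ with no monochromatic broom, specializing the constructions of Section \ref{Lower bounds} to $m = 0$. When $c \geq 2n-1$ I would color $K_{n+c+\lceil c/2\rceil - 2}$ by taking red $= K_{\lceil c/2\rceil - 1} \cup K_{n+c-1}$ (so blue is complete bipartite with parts $\lceil \frac{c}{2}\rceil - 1$ and $n+c-1$); when $4 \leq c \leq 2n-2$ I would color $K_{2(n+\lfloor c/2\rfloor)-2}$ with red $= K_{n+\lfloor c/2\rfloor-1} \cup K_{n+\lfloor c/2\rfloor-1}$. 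In each case a red broom is impossible because the broom is connected on $n+c$ vertices while every red clique has fewer vertices, and a blue broom is impossible because its two bipartition classes cannot be placed one-per-side into the blue parts: in the first construction the side of size $\lceil \frac{c}{2}\rceil - 1$ is too small for either class, and in the second both sides, of size $n + \lfloor \frac{c}{2}\rfloor - 1$, are too small for the larger class $n + \lfloor \frac{c}{2}\rfloor$. These yield $r \geq n + c + \lceil \frac{c}{2}\rceil - 1$ and $r \geq 2(n+c) - 2\lceil \frac{c}{2}\rceil - 1$ respectively.

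The matching upper bound is the real content. Fixing a $2$-coloring of $K_N$ with $N$ the claimed value, I would let $u$ be a vertex of maximum monochromatic degree (WLOG red) and work with $A = R(u)$, as in the framework of this section. The whole game is to manufacture, in one color, a path on $c$ vertices together with an endpoint that keeps $n$ spare neighbors. To produce long monochromatic paths I would invoke the Gerencsér–Gyárfás value $r(P_k) = k + \lfloor \frac{k}{2}\rfloor - 1$ and the Erdős–Gallai path/edge inequality to control any color class that avoids a long path. When $c \geq 2n-1$ the handle dominates: here $N$ comfortably exceeds $r(P_c)$, so a monochromatic path on at least $c$ vertices exists, and the task reduces to selecting an endpoint with $n$ red neighbors outside a length-$c$ initial segment, using the maximality of $\text{red-}d(u)$ together with the supply of leftover vertices to reroute the path toward a high-degree end.

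When $4 \leq c \leq 2n-2$ the bristles dominate instead: $\text{red-}d(u) = |A|$ is large, so $u$ can play the role of $a_1$ with its $n$ bristles drawn from $A$, and the task becomes growing a red path of length $c-1$ out of $u$ through $A \cup B$ while avoiding the reserved bristle vertices; if no such red path exists, I would argue the blue graph is then forced to contain the broom. The hard part, in both regimes, will be the tight extremal case where the coloring nearly coincides with the lower-bound construction: there the longest monochromatic path, the maximum monochromatic degree, and the count of spare vertices are simultaneously tight, and one must balance the Erdős–Gallai edge count against the bipartition sizes to show that the relevant path endpoint cannot have \emph{all} of its monochromatic neighbors trapped on the path. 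Pinning down this boundary behavior, and checking that the two regimes meet consistently at $c = 2n-1$ (where the formulas give $4n-2$ and $4n-3$), is where essentially all the difficulty lies.
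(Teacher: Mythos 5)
The first thing to note is that the paper does not prove this lemma at all: it is imported verbatim from Yu and Li \cite{Yu and Li}, with the excluded cases ($n=1$, $c\leq 2$, $c=3$) delegated to path/star Ramsey numbers and to \cite{GHK}. So there is no in-paper proof to compare against; the question is only whether your sketch would stand on its own as a reproof of the Yu--Li theorem.

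Your lower-bound half is correct and essentially complete: the bipartition classes of $B(n,c)$ do have sizes $\lceil c/2\rceil$ and $n+\lfloor c/2\rfloor$, and both colorings (red $K_{\lceil c/2\rceil-1}\cup K_{n+c-1}$ in the first regime, red $K_{n+\lfloor c/2\rfloor-1}\cup K_{n+\lfloor c/2\rfloor-1}$ in the second) visibly exclude a monochromatic broom by the connectivity and bipartition arguments you give, exactly in the style of Section \ref{Lower bounds} with $m=0$. The upper bound, however, is the entire content of the theorem, and your proposal does not prove it: it names a toolkit (maximum monochromatic degree, Gerencs\'er--Gy\'arf\'as, Erd\H{o}s--Gallai) and then explicitly defers the decisive step, writing that the near-extremal analysis is ``where essentially all the difficulty lies.'' Producing a monochromatic path on $c$ vertices is indeed easy at these vertex counts (in both regimes $N$ exceeds $r(P_c)$ by a comfortable margin); the genuine obstruction is forcing an endpoint of such a path to retain $n$ same-colored neighbors off the path, or, failing that, extracting the broom in the other color. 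That is precisely the step your outline waves at (``reroute the path toward a high-degree end,'' ``the blue graph is then forced to contain the broom'') without an argument, and it is where Yu and Li spend the bulk of their paper on a multi-case analysis. As written, the proposal establishes only the ``$\geq$'' direction of the two displayed equalities, so there is a genuine gap rather than an alternative proof.
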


\begin{lemma}\label{k<n-p}
If $0 \leq k \leq n-p-1$, and $n \geq p+1$, then every such $2$-coloring of a $K_{r}$ admits a monochromatic $S_c(n,m)$, for $r \geq 2(n + m + p) -1$.
\end{lemma}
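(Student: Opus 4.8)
The plan is to prove the red/blue dichotomy constructively: I will use the maximum‑degree vertex $u$ as the center $a_1$ of the large $n$‑star and attempt to assemble a red $S_c(n,m)$ around it, falling back to a blue copy only in an extremal subcase. The first step is to extract the degree information forced by the maximality of $u$. By definition $|A| = n+m+p-1+k$, and since $u$ has maximum monochromatic degree every vertex $v$ obeys $\text{red-}d(v)\le |A|$ and $\text{blue-}d(v)\le|A|$; consequently $\text{red-}d(v) \ge (r-1)-|A| \ge n+m+p-1-k = |B|$, and symmetrically $\text{blue-}d(v)\ge|B|$, for every $v$ and every admissible $r$ (larger $r$ only enlarges $|B|$). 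The hypothesis $k\le n-p-1$ then yields the key inequality $|B|\ge m+2p = m+c-1$, so \emph{every} vertex has red‑degree at least $m+2p>2p$, while $u$ itself has red‑degree $|A|\ge n$.

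Next I would build the red copy. Starting from $u=a_1$, grow a red path $a_1,a_2,\dots,a_{2p+1}$ greedily: at each of the $2p$ steps the current endpoint has at least $m+2p>2p$ red neighbours, exceeding the number of vertices used so far, so a red extension always exists; set $y=a_{2p+1}$. It then remains to hang $n$ red leaves at $u$ and $m$ red leaves at $y$ on vertices disjoint from the path $P$ and from one another. Since $y$ has red‑degree at least $m+2p$ and at most $2p$ of its red neighbours lie on $P$, at least $m$ leaf candidates for $y$ survive, and $u$ has $|A|\ge n$ red neighbours to draw on. Choosing two \emph{disjoint} leaf sets reduces, by a Hall‑type argument, to requiring that the vertices off $P$ that are red‑adjacent to $u$ or to $y$ number at least $n+m$; since $|V\setminus P| = r-(2p+1)\ge 2(n+m-1)$, this is equivalent to the condition that at most $n+m-2$ vertices outside $P$ are blue‑adjacent to \emph{both} $u$ and $y$, i.e. $|(B\cap B(y))\setminus P|\le n+m-2$.

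The main obstacle is precisely this last inequality, since a priori $|B\cap B(y)|$ can be as large as $|B| = n+m+p-1-k$, which exceeds $n+m-2$ exactly when $k<p+1$, the regime this lemma must cover. I would resolve it by exploiting the freedom in routing the path. When $p\ge 2$ the path has $2p-1\ge 3$ interior slots, and because $|B\cap B(y)|\le n+m+p-1\le n+m+2p-3$, one can steer the greedy path through enough of the common blue‑neighbours of $u$ and $y$ — such vertices carry no colour constraint among themselves and so remain eligible as interior path vertices — until the number of unusable common neighbours drops below the threshold, completing a red $S_c(n,m)$. If no such routing can succeed, then $u$ and $y$ retain an unavoidably large common blue‑neighbourhood, and I would instead assemble a \emph{blue} $S_c(n,m)$ by the symmetric construction, using a blue path of length $2p$ threaded through those common neighbours (whose blue degrees are all at least $|B|\ge m+2p$). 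The boundary case $p=1$, where $c=3$ and $S_c(n,m)=S_2(n,m)$ is a double star, would be handled separately via the results of Grossman et al.\ invoked in Lemma~\ref{Yu_Li}. I expect the genuinely delicate point to be this simultaneous control of the path's parity and length together with the disjointness of the two leaf sets, that is, guaranteeing that $y$ keeps $m$ free red neighbours while $u$ keeps $n$.
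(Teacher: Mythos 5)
Your construction takes a genuinely different route from the paper, but it does not close the one gap on which the whole lemma turns. The paper proves this by induction on $m$: the base case $m=1$ is the broom $B(n,c+1)$ handled by Lemma~\ref{Yu_Li}, and the inductive step extracts a monochromatic (say red) $S_c(n+1,m-1)$ from the same coloring, looks at the far endpoint $x$ of its link, and observes that avoiding a red $S_c(n,m)$ forces $x$ to be blue to all $n+1$ spokes and to all of $V\setminus Q$, giving $\text{blue-}d(x)\geq 2n+m-1 > n+m+p-1+k$, contradicting the maximality of $u$. That argument only ever needs to relocate a single leaf, which is precisely how it sidesteps the disjoint-leaf-set problem you run into.

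Your proposal correctly identifies that problem --- bounding the common blue neighbourhood of $u$ and the path's far endpoint $y$ --- but does not actually resolve it. Three concrete failures: (i) the claim that ``$u$ has $|A|\geq n$ red neighbours to draw on'' ignores that up to $2p$ vertices of $A$ are consumed by the path itself, so $u$ may have only $n+m+k-p-1 < n$ red neighbours off $P$ (e.g. $k=0$, $m=1$); (ii) the ``steering'' step is circular, since $y$, and hence $B(y)$, is determined only after the path is chosen, and it is also unsupported, since the common blue neighbours of $u$ and $y$ may induce an entirely blue clique through which no red path can be threaded; (iii) the fallback to a blue $S_c(n,m)$ faces the identical leaf-disjointness obstruction in the other colour, and you give no argument that failure of the red construction forces the blue one to succeed. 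Since this red/blue dichotomy is exactly the content of the lemma, the proposal as written is not a proof; if you want a direct construction you would need a genuine extremal argument here, whereas the paper's induction on $m$ avoids the issue entirely.
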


\begin{proof}
Clearly it is enough to prove the lemma for $r = 2(n + m + p) -1$. We will prove this lemma by induction.

\begin{itemize}
\item \underline{Base Case:}

Let $m = 1$. Then we wish to show that every $2$-coloring of a $K_{2n + 2p + 1}$ satisfying $k \leq n-p-1$ contains a monochromatic $S_{c}(n,1)$. Observe that $S_c(n,1)$ is equivalent to the broom graph $B(n, c+1)$. By Lemma \ref{Yu_Li}, we know that since $n \geq p + 1$, every $2$-coloring of a $K_{2n + 2p + 1}$ admits a monochromatic $B(n, c+1)$. Thus the base case is completed.

\item \underline{Inductive Step:}

Now, suppose the proposition holds for some fixed $m - 1$, and any $n \geq p + 1$. I.e., suppose that any $2$-coloring of a $K_{2n + 2(m -1) + 2p - 1}$, with $k \leq n-p-1$, must admit a monochromatic $S_c(n,m-1)$. Then we wish to show that any $2$-coloring of a $K_{2(n + m + p) - 1}$ must admit a monochromatic $S_c(n,m)$, if $n \geq p + 1$ and $k \leq n-p-1$. 

Consider some fixed $2$-coloring of a $K_{2(n + m + p)-1}$. From the induction hypothesis, since
\begin{align*}
k &\leq n-p-1 \\
&< (n + 1) - p - 1
\end{align*}
the $2$-coloring must admit a monochromatic $S_c(n+1, m-1)$, which we may assume is red. Let $Q$ be the red $S_c(n+1,m-1)$ and let $x$ be the vertex in $Q$ at the end of the path of length $2p + 1$ that begins at the $(n+1)$-star. Now, suppose to the contrary that this $2$-coloring avoided a monochromatic $S_c(n,m)$. In order to avoid a red $S_c(n,m)$, it must be the case that $x$ be blue-adjacent to the $n+1$ spoke vertices of the $n+1$ star in $Q$, as well as every vertex in $V \backslash Q$. Thus, 
\begin{align*}
\text{blue-}d(x) &\geq n + 1 + |V \backslash Q| \\
&= 2n + m - 1.
\end{align*}
 But since (by the definition of $k$) the maximum monochromatic degree of any vertex is $n + m + p - 1 + k$, it must be the case that $$n + m + p - 1 + k \geq 2n + m - 1.$$ Solving for $k$,
\begin{align*}
k &\geq n - p
\end{align*}
a contradiction.

\end{itemize}

\end{proof}

\begin{corollary}\label{k<p+1}
If $0 \leq k \leq p$, and $n \geq 2p + 1$, then every such $2$-coloring of a $K_r$ admits a monochromatic $S_c(n,m)$, for $r \geq 2(n + m + p) -1$.
\end{corollary}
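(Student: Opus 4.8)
The plan is to deduce this directly from Lemma \ref{k<n-p}, which already establishes the very same conclusion—a monochromatic $S_c(n,m)$ in every such $2$-coloring of $K_r$ for $r \geq 2(n+m+p)-1$—under the hypotheses $n \geq p+1$ and $0 \leq k \leq n-p-1$. My strategy is simply to verify that the hypotheses of the corollary are a special case of the hypotheses of that lemma; no fresh combinatorial argument about the coloring is needed, since the ambient setup (the maximum-monochromatic-degree vertex $u$, the set $A = R(u)$, and the parameterization $|A| = n+m+p-1+k$) is common to both statements.

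First I would record the arithmetic consequence of the stronger size assumption. Assuming as throughout that $p \geq 1$, the hypothesis $n \geq 2p+1$ gives $n \geq 2p+1 \geq p+2 > p+1$, so the first hypothesis of Lemma \ref{k<n-p} is met. The same assumption yields $n - p - 1 \geq (2p+1) - p - 1 = p$, so that $\{0, 1, \ldots, p\} \subseteq \{0, 1, \ldots, n-p-1\}$. Hence any $k$ with $0 \leq k \leq p$ also satisfies $0 \leq k \leq n-p-1$, which is exactly the range required by Lemma \ref{k<n-p}.

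With both hypotheses verified, Lemma \ref{k<n-p} applies verbatim and delivers the desired monochromatic $S_c(n,m)$, which is the claim. There is no substantive obstacle here: the only point to watch is the boundary case $n = 2p+1$, where $n-p-1 = p$, so the inclusion of index sets is tight but still valid, together with the standing assumption $p \geq 1$, which secures the strict inequality $n > p+1$ used above. The role of this corollary is organizational rather than mathematical—it packages the portion $0 \leq k \leq p$ of the parameter range (under the theorem's standing assumption $n \geq 2p+1$) so that the subsequent lemmas need only treat the complementary range $k \geq p+1$ en route to proving Theorem \ref{upper bound 1}.
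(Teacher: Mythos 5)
Your deduction is correct and is exactly how the paper intends the corollary to follow: since $n \geq 2p+1$ gives both $n \geq p+1$ and $n-p-1 \geq p$, the range $0 \leq k \leq p$ sits inside the range covered by Lemma \ref{k<n-p}, which then applies verbatim. The paper omits the proof precisely because it is this immediate specialization, so your argument matches the intended one.
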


\begin{lemma}\label{Jackson - 1}(\textbf{Due to Jackson}, see \cite{Jackson})
Let $G(a, b , k)$ is a simple bipartite graph with bipartition $(A, B)$ where $|A| = a \geq 2$, $|B| = b \geq k$, and each vertex of $A$ has degree at least $k$. If a graph $G(a,b,k)$ satisfies $a \leq k$ and $$b \leq 2k - 2$$ then it contains a cycle of length $2a$.
\end{lemma}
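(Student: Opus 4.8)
The plan is to prove the statement by the \emph{extremal method}, after first reading it as a covering statement: in a bipartite graph a cycle of length $2a$ uses exactly $a$ vertices from each side, so since $|A|=a$, such a cycle exists precisely when there is a cycle through \emph{every} vertex of $A$. I would record two immediate consequences of the hypotheses. First, because $b \le 2k-2$ and each $A$-vertex has at least $k$ neighbours, any two vertices $x,x' \in A$ satisfy $|N(x)\cap N(x')| \ge d(x)+d(x')-b \ge 2k-(2k-2)=2$; that is, every pair of $A$-vertices has at least two common neighbours in $B$. Second, because $a \le k$, any cycle missing a vertex of $A$ is ``short'': if $C$ is a cycle of length $2\ell < 2a$ and $v \in A\setminus V(C)$, then $d(v)\ge k\ge a>\ell=|B\cap V(C)|$, so $v$ has a neighbour in $B$ off the cycle.

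With this in hand I would take $C=x_1y_1x_2y_2\cdots x_\ell y_\ell x_1$ (with $x_i\in A$, $y_i\in B$) to be a \emph{longest} cycle and assume for contradiction that $\ell<a$, fixing $v\in A\setminus V(C)$. The aim is to lengthen $C$. The basic move is an \emph{insertion}: if $v$ is adjacent to some cycle-vertex $y_i$, and $v$ and the next $A$-vertex $x_{i+1}$ have a common neighbour $w\in B\setminus V(C)$, then replacing the edge $y_ix_{i+1}$ by the path $y_i\,v\,w\,x_{i+1}$ produces a cycle longer by two, contradicting maximality. The two structural facts above are exactly what make such a move plausible: the pair $\{v,x_{i+1}\}$ is guaranteed two common neighbours, and $v$ is guaranteed a neighbour off the cycle, so the ingredients $y_i$, $x_{i+1}$, $w$ are in principle available.

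When the direct insertion fails --- for instance when every common neighbour of $v$ with an $A$-vertex of $C$ already lies on $C$ --- I would fall back on the Pósa \emph{rotation--extension} technique: attach to $C$ a longest path $P=v_0v_1\cdots v_s$ with $v_0=v$, internally disjoint from $C$ and meeting it only at $v_s$, and rotate along $P$ to generate a family of alternative endpoints in $A$. Each such endpoint has all of its $\ge k$ neighbours trapped in $V(C)\cup V(P)$, while the whole $B$-side has size only $b\le 2k-2$; the resulting crowding, together with the guaranteed pairwise overlaps, should force two endpoints into a configuration that either closes a strictly longer cycle or enables an insertion as above. An alternative, more algebraic route would bypass cycles entirely and build the desired cycle directly from the ``two common neighbours'' property: choose a cyclic ordering of $A$ and then solve a system-of-distinct-representatives problem (via Hall's theorem, using $b\ge a$) to pick distinct linking vertices $y_i\in N(x_i)\cap N(x_{i+1})$.

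The hard part will be turning the abundance of common neighbours into a genuine \emph{lengthening} rather than a same-length swap or a mere chord: two shared neighbours only help when they occupy the right positions in the cyclic order, so the argument unavoidably splits into cases according to whether the relevant common neighbours fall on or off $C$. Controlling this case analysis, simultaneously choosing the cyclic order and the representatives in the SDR approach, and handling the possibility that $G$ is so sparsely connected that $v$ cannot be attached to $C$ at all, are where the real difficulty lies; these are exactly the technical hurdles that Jackson's argument resolves.
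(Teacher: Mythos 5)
The paper does not prove this lemma at all: it is imported verbatim as a known result of Jackson (\emph{Cycles in bipartite graphs}, J.\ Combin.\ Theory Ser.\ B 30 (1981)), so there is no in-paper argument to compare yours against. Judged on its own terms, your proposal is a strategy outline rather than a proof, and you say as much in your final paragraph. Your two preliminary observations are correct and genuinely relevant: the bound $|N(x)\cap N(x')|\ge 2k-b\ge 2$ for every pair $x,x'\in A$, and the fact that a vertex of $A$ off a short cycle retains a neighbour in $B$ off that cycle. But the decisive step --- showing that a longest cycle in fact meets every vertex of $A$ --- is never carried out. The insertion move you describe requires the simultaneous availability of a cycle vertex $y_i$ adjacent to $v$ \emph{and} a common neighbour of $v$ and $x_{i+1}$ lying off the cycle, and you acknowledge that when this configuration is absent you would ``fall back on'' rotation--extension; the resulting case analysis is exactly the content of Jackson's theorem and is left unresolved. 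A correct proof cannot end with the observation that the remaining difficulties ``are exactly the technical hurdles that Jackson's argument resolves.''

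The alternative SDR route has a concrete gap as well. Choosing a cyclic order $x_1,\dots,x_a$ and distinct representatives $y_i\in N(x_i)\cap N(x_{i+1})$ would indeed produce a cycle of length $2a$, but Hall's condition for the family $\{N(x_i)\cap N(x_{i+1})\}_{i=1}^{a}$ does not follow merely from each set having size at least $2$ (a union of $j$ sets of size $2$ can a priori have size $2<j$), nor from $b\ge a$. One would need to prove a lower bound on unions of these common-neighbourhood sets, possibly after optimizing over the cyclic order, and nothing in the proposal does this. So both routes terminate before the essential combinatorial work; the proposal identifies the right raw materials but does not constitute a proof of the lemma.
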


\begin{lemma}\label{No Red Cycle}
Consider some fixed $2$-coloring of a $K_{2(n + m +p) - 1}$. If $n \geq p + 1$, $k \geq p + 1$, and if there is a red cycle $Q$ in $A$ of length $2p$, then this $2$-coloring admits a monochromatic $S_c(n,m)$.
\end{lemma}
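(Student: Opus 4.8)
The plan is to use the maximum-degree vertex $u$ together with the red cycle $Q$ to assemble the path of the desired $S_c(n,m)$, and then to reduce the whole problem to a single degree condition on the cycle vertices. Since every vertex of $Q \subseteq A = R(u)$ is red-joined to $u$, I first observe that for any cycle vertex $q$ I can build a red path on $c = 2p+1$ vertices with endpoints $u$ and $q$: delete from $Q$ the edge at $q$ that is not needed, so the remainder is a Hamiltonian path of $Q$ having $q$ as one end and a cycle-neighbor of $q$ as the other, and then prepend $u$ via the red edge from $u$ to that other end. This path uses exactly $u$ and the $2p$ vertices of $Q$. I will always place the large $n$-star at $u$; since $k \ge p+1$ gives $|A \setminus Q| = n+m-p-1+k \ge n+m$, the vertex $u$ has at least $n+m$ red neighbors lying in the free set $F := V \setminus (\{u\} \cup Q)$, which has size $2n+2m-2$. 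The only remaining task is to attach a red $m$-star at the far endpoint $q$ using edges into $F$.

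This reduces the red construction to a clean condition: it suffices to find one cycle vertex $q$ with at least $m$ red neighbors in $F$. Indeed, given such a $q$, I select $m$ of its red $F$-neighbors as the $m$-star spokes and then choose the $n$-star spokes from the $\ge n+m$ red $F$-neighbors of $u$, discarding the at most $m$ already used; the count $2p+1+n+m \le 2(n+m+p)-1$ guarantees room. Hence the proof splits as a dichotomy according to whether such a cycle vertex exists.

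In the opposite branch, every cycle vertex $q$ has at most $m-1$ red neighbors in $F$, so blue-$d_F(q) \ge |F|-(m-1) = 2n+m-1$ for all $q \in Q$, and I will build a blue $S_c(n,m)$ instead. Choose any $p+1$ cycle vertices $q_1,\dots,q_{p+1}$ (possible since $2p \ge p+1$) and form a blue alternating path $q_1 f_1 q_2 f_2 \cdots f_p q_{p+1}$, where each $f_i \in F$ is taken from the common blue neighborhood of $q_i$ and $q_{i+1}$. Each such common neighborhood has size at least $|F|-2(m-1) = 2n$, so the $f_i$ can be chosen distinct greedily, the number of previously used vertices never exceeding $p-1 < 2n$. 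Finally I attach a blue $n$-star at $q_1$ and a blue $m$-star at $q_{p+1}$ using unused blue $F$-neighbors; since each of $q_1,q_{p+1}$ has at least $2n+m-1$ blue neighbors in $F$ while at most $p+n$ vertices of $F$ are otherwise occupied, the hypothesis $n \ge p+1$ (i.e. $p \le n-1$) leaves at least $n$ and then at least $m$ admissible choices. Either branch yields a monochromatic $S_c(n,m)$.

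I expect the main obstacle to be the $m$-star, namely guaranteeing red spokes at the far endpoint of the path, since nothing in the hypotheses directly bounds the red degree of an individual cycle vertex. The key idea that resolves this is the dichotomy above: the single inequality ``at least $m$ red $F$-neighbors at some cycle vertex'' is exactly what the red construction needs, and its failure hands us uniformly large blue degrees on all of $Q$, which is more than enough to assemble the alternating blue $S_c(n,m)$ because the cycle vertices need only serve as path and star centers joined into $F$. The rest is bookkeeping: verifying that the greedy choices never run out, which is precisely where $k \ge p+1$ (through $|A\setminus Q|\ge n+m$) and $n \ge p+1$ (through $p \le n-1$) are used.
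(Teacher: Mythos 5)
Your proposal is correct, and its overall architecture matches the paper's: the same dichotomy on whether some cycle vertex has at least $m$ red neighbours in the set $F = V \setminus (\{u\} \cup Q)$ (the paper calls this set $X = (A \cup B)\setminus Q$, which is the same set of $2n+2m-2$ vertices), with the same red construction (walk the cycle from $q$ back to a neighbour of $q$, prepend $u$, put the $n$-star at $u$ and the $m$-star at $q$). Where you diverge is the blue branch: the paper invokes Jackson's bipartite-cycle lemma (Lemma \ref{Jackson - 1}) with the degree bound blue-$d_X(q) \geq 2n+m-1$ to extract a blue cycle of length $4p$ between $Q$ and $X$ and then takes a subpath on $2p+1$ vertices with both endpoints in $Q$, whereas you build the alternating path $q_1 f_1 q_2 \cdots f_p q_{p+1}$ greedily, using the fact that any two cycle vertices have at least $|F| - 2(m-1) = 2n$ common blue neighbours in $F$, which comfortably exceeds the $p-1$ vertices already consumed. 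Your route is more elementary and self-contained (no appeal to Jackson), and your bookkeeping for the terminal stars --- at least $n+m-1-p \geq m$ spokes left at the far end, using $p \leq n-1$ --- is more explicit than the paper's, which leaves that verification implicit. The trade-off is only length; both arguments are sound and use the hypotheses $k \geq p+1$ and $n \geq p+1$ in the same places.
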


\begin{proof}
If $k \geq p + 1$, then $|A| \geq n + m + 2p$. Suppose there was a red cycle $Q \in A$ of length $2p$. Let $X = (A \cup B)\backslash Q$. Observe $|X| = 2n + 2m - 2$. If any $q \in Q$ satisfies $\text{red-}d_X(q) \geq m$, then there exists a red $S_c(n,m)$ formed by following $Q$ back for $2p$ vertices, starting from $q$, and connecting the last vertex back to $u$. This defines a red $S_c(n,m)$ with its $m$-star at $q$ and its $n$-star at $u$. Therefore all $q \in Q$ satisfy 
\begin{align*}
\text{blue-}d_X(q) &\geq 2n + m - 1\\
&\geq n + m + p.
\end{align*}
Now if there is a blue path of length $2p + 1$ between $Q$ and $X$ with both endpoints in $Q$, then there exists a blue $S_c(n,m)$ between $Q$ and $X$. Since $2n + m - 1 \leq |X| \leq 2(2n + m - 1) - 2$, we can conclude by Lemma \ref{Jackson - 1} that there is a blue cycle of length $4p$ between $Q$ and $X$, which contains in it a blue path of length $2p + 1$ with both endpoints in $Q$.

\end{proof}

\begin{lemma}\label{No Red Cycle - 2}

Consider some fixed $2$-coloring of a $K_{2(n + m + p) - 1}$. If $n \geq p + 3$, $k \geq p + 1$, and there is a red cycle $Q$ of length $2p + 2$ in $A \cup B$ with alternating vertices in $A$, then this $2$-coloring admits a monochromatic $S_c(n,m)$.

\end{lemma}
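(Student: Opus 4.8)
The plan is to fix the leftover set $X = (A\cup B)\setminus Q$ and argue by a red/blue dichotomy on the red degrees of the $B$-vertices of $Q$ into $X$. Write the cycle cyclically as $Q = a_1' b_1' a_2' b_2' \cdots a_{p+1}' b_{p+1}'$ with $a_i' \in A$ and $b_i' \in B$; since $k \ge p+1$ we have $|A| \ge n+m+2p$, and counting vertices gives $|X| = 2n+2m-4$. Because $Q$ alternates between $A$ and $B$, every sub-path of $Q$ on $2p$ vertices has one endpoint in $A$ and one in $B$, and by construction $u$ is red-joined to every $a_i'$ and blue-joined to every $b_i'$.

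First I would dispose of the red case: suppose some $b_i'$ has $\text{red-}d_X(b_i') \ge m$. Walking $2p$ vertices around $Q$ starting from $b_i'$ terminates at some $a_j' \in A$; appending the red edge $a_j' u$ produces a red path on $2p+1$ vertices from $u$ to $b_i'$, which I use as the link. I hang the $n$-star at $u$ and the $m$-star at $b_i'$. The $m$ spokes at $b_i'$ come from its red neighbours in $X$ (disjoint from $Q$, hence from the link), while $u$ retains at least $|A| - p - m \ge n$ red neighbours in $A$ for its spokes since $k \ge 1$, giving a red $S_c(n,m)$.

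The substantive case is when no such $b_i'$ exists, i.e.\ $\text{red-}d_X(b_i') \le m-1$ and hence $\text{blue-}d_X(b_i') \ge 2n+m-3$ for every $i$. I would apply Lemma~\ref{Jackson - 1} to the blue bipartite graph between $\{b_1',\dots,b_{p+1}'\}$ and $X$ with parameters $a = p+1$, $k = 2n+m-3$, and $b = |X| = 2n+2m-4$; the hypotheses $a \ge 2$, $b \ge k$, $a \le k$, and $b \le 2k-2$ all reduce (using $n \ge p+3$) to the standing assumptions, so there is a blue cycle of length $2(p+1)$ alternating between the $b_i'$ and $X$. Deleting one $X$-vertex leaves a blue path on $2p+1$ vertices whose two endpoints are both among the $b_i'$; I take this path as the link and hang both stars at these endpoints.

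The key point, and the main obstacle, is that $u$ is useless as a blue star-centre, since $\text{blue-}d(u) = |B| \le n+m-2$; the construction must instead exploit that both endpoints have blue degree at least $2n+m-3$ into $X$. Assigning the $n$ spokes to one endpoint uses $n \le 2n+m-3-p$ of its free blue neighbours (after discarding the $p$ internal $X$-vertices of the link), which is comfortable; the binding constraint is the $m$-star at the other endpoint, which after discarding the link-vertices and the $n$ just-used spokes retains at least $n+m-3-p$ free blue neighbours, and $n+m-3-p \ge m$ holds precisely when $n \ge p+3$. This is exactly where the hypothesis $n \ge p+3$ is consumed. Checking that the two spoke-sets, the link, and the two centres are pairwise disjoint then yields a blue $S_c(n,m)$, so in every case the colouring admits a monochromatic copy.
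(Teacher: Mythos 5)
Your proof is correct and follows essentially the same route as the paper's: the same dichotomy on the red degrees of the non-$A$ cycle vertices into $X=(A\cup B)\setminus Q$, the same red $S_c(n,m)$ routed through $u$ in the first case, and the same application of Lemma~\ref{Jackson - 1} to extract a blue cycle of length $2p+2$, hence a blue path on $2p+1$ vertices with both endpoints among those cycle vertices, whose large blue degrees into $X$ supply the two stars. One cosmetic point: the hypothesis only guarantees that alternate vertices of $Q$ lie in $A$, so your $b_i'$ need not lie in $B$ and $u$ need not be blue-joined to them --- but since you only ever use their degrees into $X$, nothing in your argument breaks.
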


\begin{proof}
If $k \geq p + 1$, then $|A| \geq n + m + 2p$. Suppose there was a red cycle $Q$ of length $2p + 2$ in $A \cup B$, with alternating vertices in $A$. Let $X = (A \cup B) \backslash Q$,  let $Y \subset A$ be the set of alternating vertices on $Q$ in $A$, and let $Z = Q \backslash Y$. Observe that $|X| = 2n + 2m - 4$, and $|Y| = |Z| = p + 1$. If any $z \in Z$ satisfies $\text{red-}d_X(z) \geq m$, then there exists a red $S_c(n,m)$ formed by following $Q$ back for $2p$ vertices. This will end at some $y \in Y$, and adding the edge $(u,y)$ will define the link of a red $S_c(n,m)$ with the $n$-star at $u$ and the $m$-star at $z$. Therefore we may assume that all $z \in Z$ satisfy 
\begin{align*}
    \text{blue-}d_X(z) &\geq 2n + m - 3\\
    &\geq n + m + p.
\end{align*}
Now if there is a blue path of length $2p + 1$ between $Z$ and $X$, with both endpoints in $Z$, then there exists a blue $S_c(n,m)$ between $Z$ and $X$. Since $2n + m - 3 \leq |X| \leq 2(2n + m - 3) - 2$, we can conclude by Lemma \ref{Jackson - 1} that there is a blue cycle of length $2p + 2$ between $Z$ and $X$, which contains in it a blue path of length $2p + 1$ with both endpoints in $Z$.
\end{proof}

\begin{lemma}\label{Ramsey numbers of cycles-1}(\textbf{Due to Faudree, Shelp, and Rosta}, see \cite{FR_cycles} and \cite{Rosta})
Let $C_n$ and $C_m$ denote cycles of length $n$ and $m$ respectively. Then the ramsey numbers of cycles satisfy

$$R(C_m, C_n) = \begin{cases}
2n - 1 &\mbox{for } 3 \leq m \leq n, m \mbox{ odd }, (m,n) \neq (3,3) \\
n - 1 + \frac{m}{2} &\mbox{for } 4 \leq m \leq n, m \text{ and } n \mbox{ even }, (m,n) \neq (4,4) \\
\text{max}(2m - 1, n - 1 + \frac{m}{2}) &\mbox{for } 4 \leq m < n, m \text{ even, and } n \mbox{ odd}
\end{cases}.$$
\end{lemma}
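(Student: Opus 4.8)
The plan is to prove the three cases separately, since the parities of $m$ and $n$ govern which extremal configuration is tight. The lower bounds are the routine direction, obtained from explicit $2$-colorings that simultaneously avoid a red $C_m$ and a blue $C_n$, and I would exhibit these first. For the value $2n-1$ (the odd-$m$ case), color $K_{2n-2}$ so that the red graph is the complete bipartite graph $K_{n-1,n-1}$ and the blue graph is the disjoint union $K_{n-1} \cup K_{n-1}$: the red graph is bipartite and so has no odd cycle, hence no red $C_m$, while each blue component has only $n-1$ vertices, hence no blue $C_n$. Exchanging the roles of the two colors and the two lengths gives a coloring of $K_{2m-2}$ (red $= K_{m-1}\cup K_{m-1}$, blue $= K_{m-1,m-1}$) with no red $C_m$ and no blue $C_n$ when $n$ is odd, which yields the $2m-1$ term appearing in the third case.

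For the term $n-1+\tfrac{m}{2}$ (valid whenever $m$ is even), I would use a vertex-cover coloring: take a set $S$ of $\tfrac{m}{2}-1$ vertices and a disjoint set $T$ of $n-1$ vertices, color every edge inside $T$ blue and every remaining edge (equivalently, every edge meeting $S$) red. The blue graph is $K_{n-1}$ and so contains no $C_n$; meanwhile $S$ is a vertex cover of the red graph, so any red cycle of length $\ell$ satisfies $\ell \le 2|S| = m-2 < m$, ruling out a red $C_m$. This colors $K_{n+m/2-2}$ and gives $R(C_m,C_n)\ge n-1+\tfrac{m}{2}$. Taking the best available construction in each parity regime reproduces the three stated lower bounds, and in the third case the maximum of the two.

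The upper bounds are the substantial direction and I expect them to be the main obstacle. Here I would fix $N$ equal to the conjectured value and show that any $2$-coloring of $K_N$ with no blue $C_n$ must contain a red $C_m$. The engine is a pancyclicity argument: one color class carries at least half the edges, so after passing to a long monochromatic cycle I would invoke a Bondy-type theorem (a Hamiltonian graph on $t$ vertices with more than $t^2/4$ edges is pancyclic, with sole exception $K_{t/2,t/2}$) to recover a cycle of the exact target length. Two points are delicate: (i) the parity of $m$, since for odd $m$ I need the relevant color class to contain odd cycles of all lengths up to its circumference (weak pancyclicity), not merely a Hamilton cycle; and (ii) stability near the extremal colorings, where the red graph only barely avoids a $C_m$ and one must show the blue graph is then forced to contain $C_n$. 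I would structure the argument as an induction on $n$ (and on $m$), deleting a vertex to invoke smaller cycle-Ramsey values while tracking monochromatic degrees to control the deleted vertex, and I would verify the small and exceptional cases directly; in particular $R(C_3,C_3)=R(C_4,C_4)=6$ breaks the pattern, which explains the exclusions $(m,n)=(3,3)$ and $(4,4)$ and supplies the base of the induction. The hardest part is precisely the stability analysis that pins the bound to its exact value rather than an approximate one.
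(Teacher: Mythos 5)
This lemma is not proved in the paper at all: it is a known theorem of Faudree--Schelp and Rosta, imported with citations, so there is no internal argument to compare yours against. Your lower-bound constructions are correct and are the standard ones: the bipartite/disjoint-clique coloring of $K_{2n-2}$ handles the odd-$m$ value $2n-1$, its color-swapped version on $K_{2m-2}$ gives the $2m-1$ term when $n$ is odd, and the vertex-cover coloring on $K_{n+m/2-2}$ gives $n-1+\frac{m}{2}$ for even $m$; together these yield all three lower bounds.

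The upper bound, however, is where the entire content of the theorem lives, and what you have written is a plan rather than a proof, with a genuine gap at its core. The proposed ``pancyclicity engine'' does not work as stated: Bondy's theorem requires the host graph to be Hamiltonian and to have more than $t^2/4$ edges on its own vertex set, and neither condition follows from the observation that one color class carries at least half the edges of $K_N$ --- passing to a long monochromatic cycle does not control the edge density inside that cycle's vertex set, and for odd $m$ you would additionally need odd cycles of every length below the circumference, which is a weak-pancyclicity statement requiring separate (and historically later) machinery. The actual proofs of Faudree--Schelp and Rosta proceed by a lengthy direct case analysis on the structure of colorings avoiding both cycles, and the ``stability analysis'' you correctly identify as the hardest part is precisely what is missing. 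Since the paper itself treats this as a black box, the honest options are either to cite the result as the paper does or to supply that case analysis in full; the current sketch does not substitute for it.
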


\begin{lemma}\label{degree-path}
Suppose $G$ is a graph in which every vertex has degree at least $s$. Let $z$ be any vertex in $G$. Then $G$ has a path of length $s+1$ with $z$ as an endpoint.
\end{lemma}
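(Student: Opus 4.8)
The plan is to read ``length $s+1$'' in the paper's convention, where the length of a path is its number of vertices (as in Lemmas \ref{No Red Cycle} and \ref{No Red Cycle - 2}), so that the goal is to produce a path on $s+1$ vertices---equivalently, one with $s$ edges---having $z$ as an endpoint. The cleanest route is a greedy extension argument: starting from the trivial path consisting of $z$ alone, I would repeatedly append a new vertex to the growing end until the path has exactly $s+1$ vertices, using the minimum-degree hypothesis to guarantee that an extension is always available before that point.

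Concretely, I would maintain as an invariant a path $P_i = (z = u_0, u_1, \ldots, u_i)$ on $i+1$ distinct vertices with $z$ as one endpoint. Suppose $i < s$. The endpoint $u_i$ has degree at least $s$ in $G$, but among its neighbors at most $i$ can already lie on $P_i$, since the only candidates are $u_0, \ldots, u_{i-1}$ (a vertex is not adjacent to itself). Because $s \geq i+1 > i$, the vertex $u_i$ must therefore have at least one neighbor $u_{i+1}$ that is not on $P_i$; appending it yields $P_{i+1}$, a path on $i+2$ distinct vertices still having $z$ as its other endpoint. This step is the only place the hypothesis is used, and it is exactly what prevents the path from getting ``stuck'' prematurely.

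Iterating the extension from $i = 0$, the process cannot halt while $i < s$, so it reaches $i = s$, producing a path on $s+1$ vertices with $z$ as an endpoint, which is the desired conclusion. Equivalently, one could argue extremally: take a longest path starting at $z$; its far endpoint has all of its $\geq s$ neighbors on the path, which forces the path to contain at least $s+1$ vertices, and one then truncates to the first $s+1$ of them. I do not anticipate any genuine obstacle here---the entire content is the counting at the extension step---so the only real care needed is keeping the off-by-one bookkeeping between ``vertices'' and ``edges'' consistent with the paper's convention.
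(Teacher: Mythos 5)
Your proposal is correct and follows essentially the same route as the paper: a greedy extension of a path starting at $z$, appending a new neighbor at each step until $s+1$ vertices are reached. In fact your version is slightly more careful than the paper's one-line argument, since you explicitly justify that the current endpoint always has a neighbor off the path (at most $i$ of its $\geq s$ neighbors can already be used), which is the step the paper glosses over.
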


\begin{proof}
For $i = 1, \dots, s + 1$ we construct a path where $z_1 = z$, and $z_i$ is any vertex adjacent to $z_{i - 1}$. Since every vertex has degree at least $s$, this yields a path of length $s+1$.
\end{proof}

\begin{lemma}\label{No Blue Star - helper}
Consider some fixed $2$-coloring of a $K_{2(n + m + p) - 1}$. Suppose $n \geq p + 3$, $m \geq 2$, and $k \geq p + 1$. Assume there exists a blue $m + 2p$ star in $A$, centered at some $v \in A$, with the set of spoke vertices being $A'$. If there exists a blue cycle $Q$ of length $2p$ in $A$, such that $$|A' \cap Q| \geq p + 1,$$ then this $2$-coloring must admit a monochromatic $S_c(n,m)$. 
\end{lemma}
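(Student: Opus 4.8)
The plan is to construct a blue $S_c(n,m)$ whose $m$-star is centred at $v$, using the blue cycle $Q$ (with $v$ spliced into it) as the link and reserving the surplus spokes of $v$ for the $m$-star. The reason the hypothesis asks for a star of size exactly $m+2p$, rather than something smaller, is that the link will consume the spokes of $v$ that happen to lie on $Q$: since $|A'\cap Q|\le |Q| = 2p$, at least $|A'\setminus Q|\ge (m+2p)-2p = m$ spokes of $v$ survive off the cycle, which is exactly enough to form the $m$-star at $v$ disjointly from the link. So the first step is to record this count, fix $m$ spokes of $v$ in $A'\setminus Q$ to serve as the $m$-star, and then turn to building the link.

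For the link I would exploit the overlap hypothesis $|A'\cap Q|\ge p+1$. Since the independence number of the even cycle $C_{2p}$ is exactly $p$, any $p+1$ of its vertices must contain two that are consecutive on $Q$; let $q_i,q_{i+1}$ be two such vertices, both blue-adjacent to $v$. Splicing $v$ into $Q$ between them produces a blue cycle on $\{v\}\cup Q$ of length $2p+1$, and deleting the edge $v q_{i+1}$ opens this into a blue path $P = v,q_{i+1},q_{i+2},\dots,q_i$ on exactly $2p+1$ vertices, which is precisely the length of the link of $S_c(n,m)$, with $v$ at one end and a cycle vertex $q^{\ast}=q_i$ at the other. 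Taking $v$ as $a_1$ (the $m$-star centre) and $q^{\ast}$ as $a_c$, the link and the $m$-star are now in place and are vertex-disjoint, so it remains only to attach an $n$-star at $q^{\ast}$.

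The remaining task, and the crux, is the $n$-star at $q^{\ast}$. Write $T$ for the $2p+1+m$ vertices already used by the link and the $m$-star. If $q^{\ast}$ has at least $n$ blue neighbours in $V\setminus T$, these spokes complete a blue $S_c(n,m)$ and we are done. The hard case is when $q^{\ast}$ has fewer than $n$ available blue neighbours: then, because $|V\setminus T| = 2n+m-2$, the vertex $q^{\ast}$ must acquire a large red neighbourhood. I would then argue that, under the standing assumption that the colouring avoids a blue $S_c(n,m)$, this red-degree conclusion applies not only to $q^{\ast}$ but to all of the several candidate endpoints produced by the $p+1$ overlap vertices, yielding a whole set of vertices of large red degree inside $A$. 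The hypothesis $k\ge p+1$ makes $A$ large ($|A|\ge n+m+2p$), so I expect these red edges to force either a red cycle of length $2p$ inside $A$ or a red cycle of length $2p+2$ with alternating vertices in $A$, at which point Lemma \ref{No Red Cycle} (respectively Lemma \ref{No Red Cycle - 2}) delivers a monochromatic $S_c(n,m)$ directly. Jackson's lemma (Lemma \ref{Jackson - 1}) and the cycle Ramsey numbers (Lemma \ref{Ramsey numbers of cycles-1}) are the natural tools for extracting such a red cycle from a set of high red-degree vertices.

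The main obstacle is exactly this last step. The blue construction pins down the link and the $m$-star cheaply, but nothing in the local structure guarantees an $n$-star at the cycle endpoint $q^{\ast}$, and for large $n$ the vertex $v$ cannot absorb the $n$-star itself, since it has only $m+2p$ spokes. Converting the failure of the blue $n$-star into a genuine red cycle of the precise length demanded by Lemmas \ref{No Red Cycle} and \ref{No Red Cycle - 2}, rather than merely a single vertex of high red degree, is where the real work lies, and it is here that the size bound $|A|\ge n+m+2p$ coming from $k\ge p+1$ must be used carefully.
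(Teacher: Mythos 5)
Your blue construction is essentially the paper's: reserve $m$ spokes of $v$ off $Q$ for the $m$-star, and use a traversal of $Q$ ending at a vertex of $A'\cap Q$ followed by the jump to $v$ as the link of length $2p+1$. (Your independence-number step is an unnecessary detour: you do not need two \emph{consecutive} vertices of $A'\cap Q$, only that each of the $\geq p+1$ cycle-neighbours of $A'\cap Q$ can serve as the far endpoint $q^{\ast}$, which is exactly the set the paper calls $Q''$ and which you recover later anyway.) You also correctly identify the consequence of failure: every such candidate endpoint $q$ has $\text{blue-}d_X(q)\leq n-1$, hence $\text{red-}d_X(q)\geq n+m-1$ where $X=V\setminus(Q\cup D\cup\{v\})$ has $|X|=2n+m-2$, and since $m\geq 2$ gives $n+m-1\leq |X|\leq 2(n+m-1)-2$, Jackson's lemma produces a red cycle of length $2p+2$ between $Q''$ and $X$ with alternating vertices in $Q''\subset A$.

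However, the proposal stops exactly where the real difficulty begins, and you say so yourself. The gap is this: the red $(2p+2)$-cycle $S$ delivered by Jackson's lemma lives between $Q''$ and $X$, and $X$ contains the maximum-degree vertex $u$. Lemma \ref{No Red Cycle - 2} requires the cycle to lie in $A\cup B$ (its proof hangs the red $n$-star on $u$), so it simply cannot be invoked when $u\in S$. Handling that case is about half of the paper's proof: one follows $S$ for $2p+1$ vertices from $u$ to reach some $x\in S\cap X$, notes that $\text{red-}d_Y(x)\geq m$ (with $Y=V\setminus(Q\cup D\cup S\cup\{v\})$) would finish a red $S_c(n,m)$ with $n$-star at $u$, so otherwise $\text{blue-}d_Y(x)\geq 2n-p-2\geq n+1$; then a second blue-link construction through $Q'''$ (vertices at $Q$-distance $2p-2$ from $Q'$) either yields a blue $S_c(n,m)$ or forces a red complete bipartite graph between $Q'''$ and $B(x)\cap Y$, whose red $(2p+2)$-cycle finally does avoid $u$ and feeds into Lemma \ref{No Red Cycle - 2}. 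Without this case analysis the argument is a plan rather than a proof, and the "either a red $C_{2p}$ in $A$ or a red alternating $C_{2p+2}$" dichotomy you hope for does not materialize from the high red degrees alone.
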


\begin{proof}
Let $D \subset A'$ be some set of $m$ vertices in $A'$ not on $Q$. Define $$X = V \backslash (Q \cup D \cup \{ v \}).$$ Observe that $|X| = 2n + m - 2$. Furthermore, let $Q' = A' \cap Q$, and let $Q'' \subset Q$ be the set of vertices that are adjacent to a vertex in $Q'$; note that since $|Q'| \geq p + 1$, it must also be the case that $|Q''| \geq p + 1$. Observe that if any $q \in Q''$ satisfies $\text{blue-}d_X(q) \geq n$ then there exists a blue $S_c(n,m)$, formed by following $q$ back for $2p$ vertices on $Q$, and connecting the last vertex to $v$. This defines a blue $S_c(n,m)$ with $n$-star at $q$ and $m$-star at $v$ (using $D$). Therefore, we may assume that all $q \in Q''$ satisfy $\text{blue-}d_X(q) \leq n - 1$, and so
\begin{align*}
\text{red-}d_X(q) &\geq 2n + m - 2 - (n - 1) \\
&= n + m - 1.
\end{align*}
Since $m \geq 2$, $$n + m - 1 \leq |X| \leq 2(n + m - 1) - 2$$ and so by Lemma \ref{Jackson - 1} there must be a red cycle $S$ of length $2p + 2$ between $Q''$ and $X$. Now one of two things must be true: either $u \notin S$ or $u \in S$.

Suppose $u \notin S$. Then $S$ is a red cycle of length $2p +2$ between $Q''$ and $A \cup B$. Since $Q'' \subset A$, we can conclude by Lemma \ref{No Red Cycle - 2} that this $2$-coloring admits a monochromatic $S_c(n,m)$.

Alternatively, suppose $u \in S$. Define $$Y = V \backslash (Q \cup D \cup S \cup \{ v \}).$$ Observe that $|Y| = 2n + m - p - 3$. Following $S$ for $2p + 1$ vertices from $u$ will end at some $x \in (S \cap X)$. Now if $\text{red-}d_Y(x) \geq m$ then there exists a red $S_c(n,m)$ with the $n$-star at $u$ and $m$-star at $x$. Therefore we may assume $\text{red-}d_Y(x) \leq m - 1$, and so 
\begin{align*}
\text{blue-}d_Y(x) &\geq (2n + m - p - 3) - (m - 1) \\
&= 2n - p - 2 \\
&\geq n + 1.  
\end{align*}
Let $T = B(x) \cap Y$. Finally, define $Q'''$ to be the set of vertices $q \in Q$ such that following $Q$ for $2p - 2$ vertices from $q$ ends at a vertex in $Q'$. I.e., if $q \in Q'''$, then there exists a vertex $w \in Q'$ such that $q$ and $w$ are the endpoints of a blue path of length $2p - 2$ in $Q$. Observe that since $|Q'| \geq p + 1$, it must also be the case that $|Q'''| \geq p + 1$. Now if any $t \in T$ is connected to any $q \in Q'''$ by a blue edge, then connecting $t$ to $q$, following $Q$ back for $2p - 2$ vertices from $q$, and connecting the last vertex to $v$, will form a blue $S_c(n,m)$ with the $n$-star at $x$ and $m$-star at $v$. Observe that $x$ has sufficient blue degree to form the $n$-star without interference since $Y$ and $Q$ are disjoint. Therefore we may assume there is a red complete bipartite graph between $Q'''$ and $T$. This complete bipartite graph contains a red cycle of length $2p + 2$, and since $Q''' \subset A$ and $T \subset (A \cup B)$, we may conclude by Lemma \ref{No Red Cycle - 2} that there is a monochromatic $S_c(n,m)$.
\end{proof}

\begin{lemma}\label{No Blue Star}
Consider some fixed $2$-coloring of a $K_{2(n + m + p) - 1}$. If $n \geq p + 3$, $m \geq 2$, and $k \geq p + 1$, then if there is a blue $m + 2p$ star in $A$, then this $2$-coloring admits a monochromatic $S_c(n,m)$.
\end{lemma}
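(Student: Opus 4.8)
The plan is to derive the conclusion from Lemma \ref{No Blue Star - helper}. Since we are already handed a blue $m+2p$ star in $A$, centered at some $v$ with spoke set $A'$ (so $|A'| = m+2p$), it suffices to manufacture a blue cycle $Q$ of length $2p$ lying in $A$ with $|A' \cap Q| \geq p+1$; if that fails, I would instead exhibit a red cycle triggering Lemma \ref{No Red Cycle} or Lemma \ref{No Red Cycle - 2}. The bridging observation is that $v$ is blue-adjacent to every one of the $m+2p$ spokes, so any blue path on $2p-1$ vertices inside $A'$ closes through $v$ into a blue cycle of length $2p$. This cycle lies entirely in $A$ and meets $A'$ in its $2p-1$ non-center vertices, and $2p-1 \geq p+1$ as soon as $p \geq 2$ (the degenerate case $p=1$ can be handled directly), so Lemma \ref{No Blue Star - helper} would then finish the argument.

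Accordingly, the first step is to search for a long blue path among the spokes. By Lemma \ref{degree-path} applied to the blue subgraph induced on $A'$, if every spoke has blue degree at least $2p-3$ within $A'$, then this subgraph contains a blue path of length $2p-2$, i.e.\ on $2p-1$ vertices, and the reduction above concludes. Hence I may assume the blue subgraph on $A'$ has a vertex of blue degree at most $2p-4$, equivalently a spoke of red degree at least $m+3$ inside $A'$ (recall $|A'| = m+2p$), and this deficient-degree regime is the remaining case to be dispatched.

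In that case the plan is to harvest a set of at least $p+1$ spokes, each with large red degree into a common set $X$ of controlled size, and then invoke Jackson's lemma (Lemma \ref{Jackson - 1}) with $a = p+1$ to produce a red cycle $S$ of length $2p+2$ whose vertices alternate in and out of $A$. Since every spoke lies in $A$, Lemma \ref{No Red Cycle - 2} then supplies the desired monochromatic $S_c(n,m)$. Mirroring the proof of Lemma \ref{No Blue Star - helper}, I would take $X$ to be essentially the complement of the spokes-plus-center, of size about $2n+m-2$, and argue that any spoke with blue degree at least $n$ into $X$ would instead let one build a blue $S_c(n,m)$ directly off $v$; this forces red degree at least $n+m-1$ into $X$ for the surviving spokes, so that Jackson applies with $k = n+m-1$ (the condition $|X| \leq 2k-2$ is exactly where $m \geq 2$ is used, and $a = p+1 \leq k$ follows from $n \geq p+3$).

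The step I expect to be hardest is precisely this last case, for two reasons. First, the red-cycle lemmas demand a cycle of length exactly $2p$ or $2p+2$, so I cannot merely invoke a density (extremal edge-count) bound or a ``longest cycle'' statement; I must produce an exact even length, which is why Jackson's lemma, rather than the cycle Ramsey numbers of Lemma \ref{Ramsey numbers of cycles-1}, is the right instrument. Second, and more seriously, when $m$ is small one cannot run a self-contained Ramsey-of-cycles argument on $A'$ alone: indeed $R(C_{2p},C_{2p}) = 3p-1$ exceeds $|A'| = m+2p$ once $m < p-1$, so the long blue path, or the red cycle, genuinely must be extracted using the ambient vertices outside $A' \cup \{v\}$. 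Consequently I must also track the possible collisions of the Jackson cycle $S$ with $u$, splitting into the cases $u \notin S$ (feed $S$ straight into Lemma \ref{No Red Cycle - 2}) and $u \in S$ (reroute through $u$ and a secondary degree argument), exactly as in the proof of Lemma \ref{No Blue Star - helper}.
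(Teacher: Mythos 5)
Your overall strategy---reduce to Lemma \ref{No Blue Star - helper} via a blue $2p$-cycle meeting $A'$ in at least $p+1$ vertices, falling back on Lemmas \ref{No Red Cycle} or \ref{No Red Cycle - 2} otherwise---matches the paper's, but both of your routes to that cycle have genuine gaps. First, the cycle you build passes through the star's center $v$: you close a blue path on $2p-1$ spokes through $v$. Lemma \ref{No Blue Star - helper} cannot accept such a cycle. Its proof chooses $D\subset A'$ off $Q$, sets $X=V\setminus(Q\cup D\cup\{v\})$ with $|X|=2n+m-2$, and builds the blue $S_c(n,m)$ by traversing all $2p$ vertices of $Q$ and then \emph{appending} $v$ as the center of the $m$-star; every step presumes $v\notin Q$, and the link would revisit $v$ if $v\in Q$. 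So the ``bridging observation'' does not feed into the helper lemma as proved. (There is also an off-by-one: with the paper's convention in Lemma \ref{degree-path}, minimum degree $s$ yields a path on $s+1$ vertices, so you need minimum blue degree $2p-2$ in $A'$, not $2p-3$, to get $2p-1$ vertices.)

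Second, and more seriously, your fallback case does not get off the ground. Negating ``every spoke has blue degree $\geq 2p-3$ in $A'$'' hands you exactly \emph{one} spoke of red degree $\geq m+3$ inside $A'$, whereas Jackson's lemma with $a=p+1$ needs $p+1$ spokes each with red degree $\geq n+m-1$ into $X$. The mechanism you propose to force this---``any spoke with blue degree at least $n$ into $X$ would let one build a blue $S_c(n,m)$ directly off $v$''---is unjustified: a spoke is at distance one from $v$, so an $n$-star at the spoke plus the edge to $v$ plus an $m$-star at $v$ is an $S_2(n,m)$, not an $S_{2p+1}(n,m)$. In Lemma \ref{No Blue Star - helper} that implication works only because the cycle $Q$ supplies a blue path of $2p$ vertices from the spoke to a vertex of $A'$; here that path is precisely what you lack. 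The paper sidesteps all of this with one move you identified as the obstruction but did not resolve: when $m\leq p-2$ it pads $A'$ with $p-m-1$ further vertices of $A\setminus(A'\cup\{v\})$ to reach $3p-1$ vertices, applies $R(C_{2p},C_{2p})=3p-1$ (Lemma \ref{Ramsey numbers of cycles-1}) to the padded set, and notes that a blue $2p$-cycle there still meets $A'$ in at least $2p-(p-m-1)=p+m+1\geq p+1$ vertices while automatically avoiding $v$; a red $2p$-cycle goes straight to Lemma \ref{No Red Cycle}. I would rework your proof along those lines.
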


\begin{proof}
Suppose that there was such a blue $m + 2p$ star in $A$ centered at some vertex $v \in A$. Let the set of $m + 2p$ spoke vertices be $A' \subset A$.

First suppose that $m \geq p - 1$. Then $|A'| \geq 3p - 1$, and so by Lemma \ref{Ramsey numbers of cycles-1} we know there must either be a red cycle of length $2p$ in $A'$, or a blue cycle of length $2p$ in $A'$. If there is a red cycle of length $2p$ in $A'$, then by Lemma \ref{No Red Cycle} we could conclude that there is a monochromatic $S_c(n,m)$. Therefore we may assume there is no such red cycle, and so therefore there is a blue cycle $Q \subset A'$ of length $2p$. Now $A'$ is a blue $m + 2p$ star in $A$ such that there exists a blue cycle $Q$ in $A$ satisfying $|A' \cap Q| \geq p + 1$. Thus by Lemma \ref{No Blue Star - helper}, this $2$-coloring must admit a monochromatic $S_c(n,m)$.

Now assume that $m \leq p - 2$. Choose some set of $p - m - 1$ vertices from $A \backslash (A' \cup\{ v \} )$. Call this set $D$. Set $A'' = D \cup A',$ and observe that $|A''| = 3p -1$, and so again by Lemma \ref{Ramsey numbers of cycles-1}, there must either be a red cycle of length $2p$ in $A''$, or a blue cycle of length $2p$ in $A''$. If red, then by Lemma \ref{No Red Cycle} we could conclude that there is a monochromatic $S_c(n,m)$. Therefore we may assume there is a blue cycle of length $2p$ in $A''$. Call this blue cycle $Q$, and observe that $$|Q \cap D| \leq p- m -1$$ and so $|Q \cap A'| \geq p + m + 1$. Now $A'$ is a blue $m + 2p$ star in $A$ such that there exists a blue cycle $Q$ in $A$ satisfying $|A' \cap Q| \geq p + 1$. Thus by Lemma \ref{No Blue Star - helper}, this $2$-coloring must admit a monochromatic $S_c(n,m)$.

\end{proof}

\begin{lemma}\label{p < k < 2p + 1}
If $p+1 \leq k \leq 2p$, $n \geq 2p + 1$, and $m \geq 2$, then every such $2$-coloring of a $K_r$ admits a monochromatic $S_c(n,m)$, for $r \geq 2(n + m + p) -1$.
\end{lemma}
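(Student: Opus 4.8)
The plan is to assume, toward a contradiction, that the coloring admits no monochromatic $S_c(n,m)$, and to derive a contradiction purely from the three structural lemmas already in hand. First I would invoke Lemma \ref{No Blue Star} (whose hypotheses $n \geq p+3$, $m \geq 2$, $k \geq p+1$ hold here, at least for $p \geq 2$): the absence of a monochromatic $S_c(n,m)$ means there is no blue $(m+2p)$-star in $A$, so $\text{blue-}d_A(v) \leq m+2p-1$ and hence $\text{red-}d_A(v) \geq |A|-1-(m+2p-1) = n+k-p-1 \geq 2p+1$ for every $v \in A$. Likewise, Lemmas \ref{No Red Cycle} and \ref{No Red Cycle - 2} give that there is neither a red $C_{2p}$ inside $A$ nor a red $C_{2p+2}$ with $p+1$ alternating vertices in $A$. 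I would also record two facts: since $u$ has maximum monochromatic degree $|A|$, every vertex $v$ satisfies $\text{red-}d(v) \geq |B| = n+m+p-1-k$; and I may assume $k \geq n-p$, since otherwise Lemma \ref{k<n-p} already produces the monochromatic $S_c(n,m)$ (this forces $n \leq 3p$, which is really the only range in which this lemma is needed).

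The goal is then to exhibit one of the two forbidden red cycles, and I would split on where the red degree of the vertices of $A$ is concentrated. If the red graph induced on $A$ has minimum degree at least $|A|/2$ — which is automatic once $m \leq n+k-3p-1$ — then a standard pancyclicity argument (alternatively, Jackson's theorem applied to a balanced bipartition of $A$) shows this graph is either pancyclic or equal to $K_{|A|/2,|A|/2}$; since $2p$ is even and $|A|/2 \geq p$, either case yields a red $C_{2p}$ inside $A$, contradicting the paragraph above.

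The crux is the complementary regime, where $m$ is large and the red graph on $A$ is sparse relative to $|A|$; here I would instead exploit the red edges \emph{between} $A$ and $B$. The key identity is that for every $v \in A$ we have $\text{red-}d_A(v) + \text{red-}d_B(v) = \text{red-}d(v) \geq |B|$, so a vertex whose red degree inside $A$ is small must have red degree into $B$ of order $|B| \approx m$. Since the forced minimum red degree in $A$ is only $n+k-p-1$ (bounded in $m$) while $|B|$ grows with $m$, I can locate a set $Y \subseteq A$ with $|Y| = p+1$ in which every vertex has red degree into $B$ far exceeding $p+1$, pass to a bounded set $B' \subseteq B$ with $|B'| \leq 2(p+1)-2$ on which each vertex of $Y$ still has at least $p+1$ red neighbors, and apply Jackson's theorem (Lemma \ref{Jackson - 1}) with $a = p+1$. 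This produces a red cycle of length $2p+2$ alternating between $Y \subseteq A$ and $B'$, i.e. exactly the red $C_{2p+2}$ forbidden by Lemma \ref{No Red Cycle - 2}.

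I expect the main obstacle to be precisely this degree-concentration step in the large-$m$ regime: extracting a \emph{single} bounded set $B' \subseteq B$ on which all $p+1$ vertices of $Y$ simultaneously retain $\geq p+1$ red neighbors, since the relevant red neighborhoods in $B$ need not overlap a priori. Making this work will require a careful averaging or greedy selection (or a preliminary reduction to a sub-pair on which the red $A$–$B$ bipartite density is bounded below), and it is here that the full strength of the bounds $\text{red-}d(v) \geq |B|$, $\text{red-}d_A(v) \geq n+k-p-1$, together with the constraints $p+1 \leq k \leq 2p$ and $n \leq 3p$, must be used. Finally, I would match the two regimes at a common threshold in $m$ and dispose of the small cases $p = 1$ (where $n \geq p+3$ fails) and $p = 2$ (where $R(C_4,C_4) = 6 \neq 3p-1$) by separate, direct arguments.
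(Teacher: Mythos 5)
Your reduction to the three structural lemmas is sound as far as it goes (no blue $(m+2p)$-star in $A$, hence $\text{red-}d_A(v) \geq n+k-p-1$; no red $C_{2p}$ in $A$; no red $C_{2p+2}$ alternating with $A$; and $n \leq 3p$ after invoking Lemma \ref{k<n-p}), and your dense case is fine modulo importing Bondy's pancyclicity theorem, which is not among the paper's tools. The genuine gap is in your complementary regime. The identity $\text{red-}d_A(v) + \text{red-}d_B(v) + 1 = \text{red-}d(v) \geq |B|$ only forces red edges from $v$ into $B$ when you have an \emph{upper} bound on $\text{red-}d_A(v)$, and no such bound is available: the hypotheses give only the lower bound $\text{red-}d_A(v) \geq n+k-p-1$. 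Whenever $n+k-p-1 \geq |B|-1$, i.e. $2k \geq m+2p-1$ (which holds, for instance, for $k=2p$ and any $m \leq 2p+1$), a coloring can satisfy every degree constraint with $\text{red-}d_B(v)=0$ for all $v \in A$, while simultaneously the guaranteed minimum red degree $n+k-p-1$ sits below $|A|/2$ (this happens exactly when $m > n+k-3p-1$; e.g. $n=2p+1$, $k=2p$, $p < m \leq 2p+1$). In that parameter range neither of your cases is forced to apply: the set $Y$ of $p+1$ vertices with red degree at least $p+1$ into $B$ need not exist, so the red $C_{2p+2}$ you want for Lemma \ref{No Red Cycle - 2} cannot be manufactured, and the pancyclicity hypothesis is not guaranteed either. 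The difficulty you flag (finding a common bounded $B'$) is downstream of this more basic failure.

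For comparison, the paper avoids trying to force a forbidden red cycle altogether. After using Lemma \ref{No Blue Star} to get $\text{red-}d_A(a) \geq n$ for all $a \in A$, it applies the Yu--Li broom result (Lemma \ref{Yu_Li}) inside $A$ to obtain a monochromatic broom $B(m,2p+1)$. A red broom attaches directly to $u$ to give a red $S_c(n,m)$. For a blue broom, the tail endpoint $v$ must have blue degree at most $n-1$ outside the broom (else the blue broom extends to a blue $S_c(n,m)$), hence red degree at least $m+2p-1$ there; Lemma \ref{degree-path} then grows a red path of length $2p$ inside $A$ ending at $v$, producing a red $B(m,2p)$ whose tail lands in $A$ and attaches to $u$. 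You would need either to adopt something like this broom-based construction or to find a genuinely new argument for the sparse-in-$A$ regime.
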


\begin{proof}
Clearly it is enough to prove the lemma for $r = 2(n + m + p) - 1$. If $p+1 \leq k \leq 2p$, then $n + m + 2p \leq |A| \leq n + m + 3p - 1$. Since $u$ has maximal monochromatic degree, $$\text{red-}d(v) \geq n + m - p -1$$ for all $v \in V$. By Lemma \ref{No Blue Star} we may assume there is no blue $m + 2p$ star in $A$. Therefore, for all $a \in A$, we have $\text{red-}d_A(a) \geq n$. 

By Lemma \ref{Yu_Li}, there must be a monochromatic broom $B(m, 2p + 1)$ in $A$. If red, then connecting the second-to-last vertex of the path of the broom to $u$ completes a red $S_c(n,m)$. Thus we may there exists a blue $B(m, 2p + 1)$ in $A$. Let the last vertex of the path of this broom be $v$, let the set of $m + 2p + 1$ vertices that define this broom be $Q$, and let $X = A \cup B \backslash Q$. Note that $|X| = 2n + m - 3$. Now clearly if $\text{blue-}d_X(v) \geq n$, then $v$ completes a blue $S_c(n,m)$, with $Q$ forming the remainder of the linked double star. Therefore we may assume $\text{blue-}d_X(v) \leq n - 1$, and so
\begin{align*}
\text{red-}d_X(v) &\geq 2n + m - 3 - (n -1) \\
&= n + m - 2 \\
&\geq m + 2p - 1.
\end{align*}
Finally, note that since $v \in A$, and $ \forall a \in A$
\begin{align*}
\text{red-}d_A(a) &\geq n \\
&\geq 2p + 1,
\end{align*}
by Lemma \ref{degree-path} there must be a red path of length $2p$ in $A$, with one endpoint somewhere in $A$ and the other endpoint at $v$. Since $\text{red-}d_X(v) \geq m + 2p - 1$, this result implies that there exists a red $B(m, 2p)$ in $A \cup B$, with the endpoint of the tail somewhere in $A$. Connecting the endpoint of the tail to $u$ will complete a red $S_c(n,m)$.
\end{proof}

\begin{lemma}\label{Ramsey numbers of paths versus stars}(\textbf{Due to Parsons}, see \cite{Parsons})
The ramsey numbers of paths vs. stars satisfy

$$r(P_l, K_{1,n}) \leq n + l - 1.$$
\end{lemma}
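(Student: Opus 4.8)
The plan is to prove this upper bound directly, by converting the absence of a blue star into a red minimum-degree condition and then extracting a long red path via Lemma \ref{degree-path}. Fix any red/blue coloring of the edges of $K_N$ with $N = n + l - 1$, and suppose toward a contradiction that it contains no blue $K_{1,n}$; I will produce a red $P_l$.

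The first step is a degree count. If no vertex is the center of a blue $K_{1,n}$, then every vertex has blue degree at most $n - 1$. Since each vertex has total degree $N - 1 = n + l - 2$ in $K_N$, every vertex must then have red degree at least $(n + l - 2) - (n - 1) = l - 1$. The second step extracts the path: the red subgraph now has minimum degree at least $l - 1$, so Lemma \ref{degree-path} applied with $s = l - 1$ (starting from any vertex) produces a red path on $s + 1 = l$ vertices, namely a red $P_l$. Thus every $2$-coloring of $K_{n+l-1}$ contains either a blue $K_{1,n}$ or a red $P_l$, which gives $r(P_l, K_{1,n}) \le n + l - 1$.

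I do not anticipate a genuine obstacle, since the whole argument reduces to lining up two counts. The only point that needs care is the bookkeeping between edge-length and vertex-count for paths: the paper (consistently with Lemma \ref{degree-path}) measures a path by its number of vertices, so a red minimum degree of exactly $l - 1$ is precisely what is required to force a red $P_l$ on $l$ vertices, and the choice $N = n + l - 1$ is exactly tight for this. Any larger $N$ would only increase the guaranteed red degree and strengthen the conclusion, so the bound is best possible under this method.
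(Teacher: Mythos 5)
Your proof is correct. Note, however, that the paper offers no proof of this lemma at all: it is imported as a black box, attributed to Parsons \cite{Parsons}, so there is no "paper's argument" to compare against step by step. What you have written is the standard self-contained derivation of the upper bound, and it checks out: the absence of a blue $K_{1,n}$ forces every blue degree to be at most $n-1$, hence every red degree to be at least $(n+l-2)-(n-1)=l-1$, and the greedy path-extension argument (which is exactly the paper's Lemma \ref{degree-path} with $s=l-1$) yields a red path on $l$ vertices. Your bookkeeping on the vertex-count convention is also right: the proof of Lemma \ref{degree-path} constructs $z_1,\dots,z_{s+1}$, so ``length $s+1$'' there means $s+1$ vertices, and $s=l-1$ gives a $P_l$. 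What your approach buys is that it makes the paper self-contained on this point, and it nicely reuses a lemma the paper already proves; what it does not give is Parsons' full theorem, which determines $r(P_l,K_{1,n})$ exactly (the true value is sometimes strictly below $n+l-1$), but only the inequality is claimed and used here, so nothing is missing for the paper's purposes.
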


\begin{lemma}\label{k > 2p}
If $k \geq 2p + 1$, and $n \geq p+3$, then every such $2$-coloring of a $K_r$ admits a monochromatic $S_c(n,m)$, for $r \geq 2(n + m + p) - 1$.
\end{lemma}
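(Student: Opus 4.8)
The plan is to reduce the whole statement, via Lemma \ref{No Red Cycle}, to the single task of exhibiting a red cycle of length $2p$ inside $A$. Since $k \geq 2p+1 \geq p+1$ and $n \geq p+3 \geq p+1$, the hypotheses of Lemma \ref{No Red Cycle} are met, so once such a red $C_{2p}$ is produced the coloring admits a monochromatic $S_c(n,m)$. Two reductions set this up. First, if $m=1$ then $S_c(n,1)$ is the broom $B(n,2p+2)$, and for $n \geq p+3$ Lemma \ref{Yu_Li} gives $r(B(n,2p+2)) = 2(n+1+p)-1 = 2(n+m+p)-1$; so I may assume $m \geq 2$. Second, since $n \geq p+3$, $m \geq 2$, and $k \geq p+1$, Lemma \ref{No Blue Star} lets me assume there is no blue $(m+2p)$-star in $A$. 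Hence every vertex of $A$ has blue degree at most $m+2p-1$ within $A$, so
\[
\text{red-}d_A(a) \;\geq\; |A| - m - 2p \;=\; (n+m+p-1+k) - m - 2p \;\geq\; n+p
\]
for all $a \in A$. Using $k \geq 2p+1$ and $m \leq n$ one checks $|A| \geq \tfrac{1}{2}(3m+6p+2)$, so this minimum degree is in fact at least $(|A|+2)/3$; the blue graph on $A$ is the complement and has maximum degree at most $m+2p-1$.

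The problem now becomes purely structural: the red graph $G$ on $A$ has minimum degree exceeding $|A|/3$ and its complement has maximum degree at most $m+2p-1$; show that $G$ contains $C_{2p}$. I would first extract a long red path. Applying Parsons' bound (Lemma \ref{Ramsey numbers of paths versus stars}) inside $A$ with $s=m+2p$ yields either a blue $(m+2p)$-star, which is excluded above, or a red path on at least $|A|-m-2p+1 \geq n+p+1$ vertices; equivalently, the endpoint of a longest red path has all of its $\geq n+p$ red neighbours on the path, so $G$ has circumference at least $n+p+1 > 2p$. It then remains to pass from a red graph of large circumference and high minimum degree to a red cycle of the \emph{exact} even length $2p$.

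This last step, in the intermediate-density regime, is the main obstacle. When $k$ is moderately large the red minimum degree comfortably exceeds $|A|/2$, and then a greedy argument closes a long red path into a $C_{2p}$ directly (any two vertices of the path share a red neighbour outside it). The difficulty is the regime where $k$ is near $2p+1$ and $m$ is near $n$, so the red minimum degree lies strictly between $|A|/3$ and $|A|/2$ and no Dirac/Ore-type closing works. There I plan to force the cycle with the two tools the section supplies. Exploiting that the blue graph on $A$ has bounded maximum degree, I would choose a balanced bipartition $Y \cup Z \subseteq A$ with $|Y|=p$ so that the red bipartite graph between $Y$ and $Z$ is dense — each $y \in Y$ red-adjacent to more than half of $Z$ — and then invoke Jackson's Lemma (\ref{Jackson - 1}) with $a=p$ to obtain a red $C_{2p}$ with all vertices in $A$; the bounded blue degree is precisely what makes the hypothesis $b \leq 2k'-2$ attainable once $|Z|$ exceeds roughly $2(m+2p)$. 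Where even this density cannot be arranged, I would instead apply the cycle Ramsey value $R(C_{2p},C_{2p}) = 3p-1$ (Lemma \ref{Ramsey numbers of cycles-1}) to $(3p-1)$-subsets of $A$ to force a monochromatic $C_{2p}$, and rule out the all-blue alternative using the sparsity of the blue graph on $A$.

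Making this final dichotomy airtight — in particular, excluding a blue $C_{2p}$ without having a blue star to feed into Lemma \ref{No Blue Star - helper} — is the step I expect to demand the most care, and it is where the hypothesis $n \geq p+3$ (entering through Lemma \ref{No Blue Star}) does the real work, since it forces the blue graph on $A$ to be sparse enough that a persistent red $C_{2p}$-free structure of this minimum degree is untenable. The degenerate case $p=1$ (where $C_{2p}$ is not a genuine cycle) I would dispatch separately, as there $S_c(n,m)$ is small enough to be settled directly or by the earlier broom and path–star results.
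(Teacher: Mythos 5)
Your reduction to Lemma \ref{No Red Cycle} and your use of Lemma \ref{No Blue Star} to force $\text{red-}d_A(a) \geq n+p$ match the paper's opening moves, but there is a genuine gap at exactly the point you flag: you never actually produce the red cycle of length $2p$ in $A$, and none of the three devices you propose for the ``intermediate-density regime'' closes it. A greedy/Dirac-type closing needs red minimum degree above $|A|/2$, which fails when $m$ is close to $n$ and $k$ is close to $2p+1$ (then $\text{red-}d_A(a) \geq n+p$ while $|A|/2$ can be as large as $n+\tfrac{3p}{2}$). The Jackson route needs a set $Z \subseteq A$ with $|Z| \geq 2m+4p$ disjoint from the chosen $p$-set, and $|A|-p \geq 2m+4p$ amounts to $n \geq m+2p$, which again fails for $m$ near $n$. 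And the fallback via $R(C_{2p},C_{2p})=3p-1$ leaves you with a possible \emph{blue} $C_{2p}$ that none of the section's lemmas can exploit: Lemma \ref{No Blue Star - helper} needs a blue $(m+2p)$-star meeting the cycle, and you have already assumed such stars away.

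The missing idea is that one need not guarantee a red $C_{2p}$ unconditionally. The paper applies Parsons' bound (Lemma \ref{Ramsey numbers of paths versus stars}) to get a red path $P$ on exactly $2p$ vertices in $A$ and then splits on the red degrees of the two endpoints \emph{into $P$}: if either endpoint has at most $p$ red neighbours on $P$, then its red degree off $P$ is at least $(n-p+k-1)-p \geq m$, so that endpoint centers a red $m$-star and attaching $u$ to the other endpoint completes a red $S_c(n,m)$ outright --- no cycle needed; if both endpoints have at least $p+1$ red neighbours on $P$, a crossing-pair (rotation) argument closes $P$ into a red $C_{2p} \subset A$, and Lemma \ref{No Red Cycle} finishes. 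The case in which you cannot manufacture the red $2p$-cycle is precisely the case in which a red $S_c(n,m)$ can be built directly, and without that dichotomy your argument does not go through.
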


\begin{proof}
Clearly, it suffices to prove the result for $r = 2(n + m + p) - 1$. If $k \geq 2p + 1$, then $|A| \geq n + m + 3p$. By Lemma \ref{No Blue Star}, we may assume that for all $a \in A$, $$\text{blue-}d_A(a) \leq m + 2p - 1$$ and so 
\begin{align*}
\text{red-}d_A(a) &\geq n - p + k - 1\\
&\geq n+p+1.
\end{align*}
However, if there is no blue $m + 2p$ star in $A$, then by Lemma \ref{Ramsey numbers of paths versus stars} there must be a red path $P$ of length $2p$ in $A$. Let the vertices of $P$ be $$P = \{ v_1, v_2, \ldots, v_{2p - 1}, v_{2p} \}$$ where $P$ has endpoints $v_1$ and $v_{2p}$. Since $v_1, v_{2p} \in A$,
\begin{align*}
\text{red-}d_A(v_1) &\geq n - p + k - 1 \\
&\geq m + p
\end{align*}
(and similarly for $v_{2p}$). Now if either $\text{red-}d_P(v_1) \leq p$ or $\text{red-}d_P(v_{2p}) \leq p$, then one of $v_1$ or $v_{2p}$ forms the center of a red $m$-star, and connecting the other endpoint of $P$ to $u$ will complete a red $S_c(n,m)$. Therefore we may assume that both $\text{red-}d_P(v_1) \geq p+1$ and $\text{red-}d_P(v_{2p}) \geq p+1$. Let $X = R(v_1) \cap P$, and $Y = R(v_{2p}) \cap P$. Now we claim that there must exist some vertex $v_i \in X$, such that $v_{i-1} \in Y$. Suppose to the contrary that there was no such $v_i$. Observe that every $v_i \in X$ corresponds uniquely to a $v_{i - 1}$ that (by supposition) must be connected to $v_{2p}$ by a blue edge. However, this would imply that $\text{blue-}d_P(v_{2p}) \geq p$ and so $\text{red-}d_P(v_{2p}) \leq p-1$, a contradiction. Therefore, we may assume there exists some $v_i \in X$ such that $v_{i-1} \in Y$. But now these vertices can be used to define a red cycle $Q$, as follows:
$$Q = \{ v_1, v_2, \ldots, v_{i-1}, v_{2p}, v_{2p - 1}, \ldots, v_{i + 1}, v_{i}, v_1 \}.$$Observe that $Q \subset A$ is a red cycle of length $2p$, and so by Lemma \ref{No Red Cycle}, this $2$-coloring must admit a monochromatic $S_c(n,m)$.
\end{proof}

Combining the results of Lemma \ref{Yu_Li}, Corollary \ref{k<p+1}, Lemma \ref{p < k < 2p + 1}, and Lemma \ref{k > 2p}, the proof of Theorem \ref{upper bound 1} is complete.

\subsection*{Proof of Theorem \ref{upper bound 2}}
As before, we will begin by introducing notation that will be used throughout this section. Consider some fixed $2$-coloring of a complete graph $K$, and let $V$ denote the vertex-set of $K$. In this section, we will assume that $|V| \geq 3p + 1$. Therefore, by lemma \ref{Ramsey numbers of cycles-1}, there must exist a monochromatic cycle of length $2p+2$ in this $2$-coloring.

For every monochromatic cycle in the $2$-coloring of $K$, we will also identify an associated ``primed" vertex. In particular, if $C$ is a blue cycle of length $2p+2$ in $K$, then its primed vertex $u \in V \backslash C$, is the vertex such that $\text{blue-}d_C(u) \geq \text{blue-}d_C(v)$ for all $v \in V \backslash C$. Similarly, if $C$ is a red cycle of length $2p+2$ then its primed vertex $u \in V \backslash C$ is the vertex such that $\text{red-}d_C(u) \geq \text{red-}d_C(v)$. For a particular monochromatic cycle $C$ of length $2p + 2$, its primed vertex will be denoted $u_C$. Now, using $u_C$, we will define three important sets of vertices in $C$. First, if $C$ is a blue cycle we will define $S_C = B(u_C) \cap C$, and similarly if $C$ is a red cycle, $S_C = R(u_C) \cap C$. Second, we will define the set $T_C^i$ as $$T_C^i = \{ c \in C : \exists s \in S_C \text{ such that } c \text{ and } s \text{ are endpoints of a path of length } i \text{ on } C \}.$$ Finally, define $Z_C = V \backslash C$. Note that $u_C \in Z_C$.

Our proofs focus on a particular cycle $Q$, with length $2p + 2$, whose primed vertex is maximal. Specifically, if $H$ is the set of all monochromatic cycles of length $2p+2$ in the $2$-coloring of $K$, then $Q \in H$ is the cycle such that $|S_Q| \geq |S_C|$ for all $C \in H$. We will assume, without loss of generality, that $Q$ is blue. For the sake of concision, we shall refer to $u_Q$ simply as $u$, $S_Q$ as $S$, $T_Q^i$ as $T^i$, and $Z_Q$ as $Z$.

We will now provide a series of lemmas that will be used repeatedly in this section. Unless explicitly stated otherwise, we shall assume that $n \leq p-2$.

\begin{lemma} \label{S<p}
If $|S| \leq p-1$, and $n \leq p$, then every such $2$-coloring of a $K_r$ admits a monochromatic $S_c(n,2)$, for $r \geq n + 3p + 3$.
\end{lemma}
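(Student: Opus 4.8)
The plan is to build a red $S_c(n,2)$ directly, exploiting that the hypothesis $|S|\le p-1$ forces an abundance of red edges from outside $Q$ into $Q$. It suffices to treat $r = n+3p+3$. First I would record the key degree bound: since $u$ maximizes $\text{blue-}d_Q$ over $Z = V\backslash Q$ and $\text{blue-}d_Q(u)=|S|\le p-1$, every $v\in Z$ satisfies $\text{blue-}d_Q(v)\le p-1$, hence
$$\text{red-}d_Q(v)\ \ge\ (2p+2)-(p-1)\ =\ p+3.$$
Moreover $|Z| = |V| - (2p+2) \ge (n+3p+3)-(2p+2) = n+p+1$, so the red bipartite graph between $Z$ and $Q$ is dense on the $Z$-side, and there is slack of $p$ vertices beyond the $n+2p+3$ needed for a copy of $S_c(n,2)$.

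Next I would produce the link (a red path of length $2p$). Applying Lemma \ref{Jackson - 1} to the red bipartite graph with parts $A_0\subseteq Z$, $|A_0|=p+1$, and $B=Q$, $|B|=2p+2$, with $k=p+3$ (the conditions hold: $a=p+1\le k$ and $b=2p+2\le 2k-2=2p+4$), yields a red cycle of length $2(p+1)=2p+2$ whose vertices alternate between $A_0$ and $Q$. Deleting one of its $Q$-vertices leaves a red path $P$ of length $2p$ whose two endpoints $x,y$ lie in $A_0\subseteq Z$, and $P$ meets $Q$ in exactly $p$ vertices. The red $2$-star is then the easy part: $y$ has at least $(p+3)-p=3$ red neighbours in $Q\backslash P$, two of which serve as its spokes.

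The hard part is the red $n$-star at $x$. The same count only guarantees $x$ has $\ge 3$ red neighbours in $Q\backslash P$, whereas we need $n$, which may be as large as $p$; and the obstacle is genuinely tight, since the link already consumes about $p$ of the $2p+2$ vertices of $Q$ and in the worst case these coincide with $x$'s red neighbourhood (the total $Q$-budget $p + n + 2 \le 2p+2$ for path-, $n$-star-, and $2$-star-vertices fits only barely when $n=p$). To overcome this I would exploit that $x$'s \emph{blue} neighbourhood in $Q$ is small (size $\le p-1$) and try to force the interior $Q$-vertices of the link to lie in that blue neighbourhood, so that $x$'s $\ge p+3$ red neighbours survive off the path and supply $\ge p+2 \ge n$ spokes; concretely this means applying Jackson's lemma to a carefully restricted host instead of to all of $Q$, fixing $x$ and its spoke set last, and keeping the two stars vertex-disjoint using the spare vertices counted above. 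A cleaner alternative I would pursue in parallel is to play the constructed red $C_{2p+2}$ against the maximality of $Q$: by the choice of $Q$ it has $|S_{Q'}|\le|S|\le p-1$, so every vertex off it has $\text{blue-}d_{Q'}\ge p+3$, and comparing the red and blue cycles should either complete a monochromatic $S_c(n,2)$ or contradict maximality. I expect the bookkeeping that guarantees $n$ free red spokes at $x$ while keeping the link and both stars disjoint to be the crux of the argument.
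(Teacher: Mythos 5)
Your opening moves match the paper's: the bound $\text{red-}d_Q(z)\ge p+3$ for all $z\in Z$, and the observation that Jackson's lemma then turns any $p+1$ vertices of $Z$ into a red $(2p+2)$-cycle alternating with $Q$, are both correct and both appear in the paper. But the proposal has a genuine gap exactly where you flag it: the $n$-star is never constructed, and your primary plan (an all-red $S_c(n,2)$ with the $n$-star's spokes drawn from $Q$) is not the route that works. The budget problem you identify is real, and the paper does not fight it.

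The missing idea is that $|S|\le p-1$ is not merely a degree condition on $Q$ --- it is a statement about the cycle chosen to \emph{maximize} $|S_C|$ over all monochromatic $(2p+2)$-cycles $C$, so the same bound propagates to every red $(2p+2)$-cycle you manufacture. The paper exploits this repeatedly: if some $z\in Z$ had $\text{red-}d_Z(z)\ge p$, one could build (via Jackson) a red $(2p+2)$-cycle $Q'$ avoiding $z$ with $\text{red-}d_{Q'}(z)\ge p$, forcing $|S_{Q'}|\ge p>|S|$ and contradicting maximality. Hence $\text{red-}d_Z(z)\le p-1$ for all $z\in Z$, so $\text{blue-}d_Z(z)\ge n+1$: the $n$-star lives in $Z$, and in \emph{blue}. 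A similar maximality argument shows every $z\in Z$ has a blue edge into $Q$; following $Q$ in blue for $2p$ further vertices yields a blue link ending at some $q'\in Q$, and either $q'$ has a blue edge back into $Z\setminus\{z\}$ (completing a blue $S_c(n,2)$) or $q'$ is red-complete to $Z\setminus\{z\}$, which feeds one final Jackson-plus-maximality contradiction. Your ``cleaner alternative'' --- playing the constructed red cycle against the maximality of $Q$ --- is the right instinct and is essentially the engine of the paper's proof, but as written it is only a sketch; without the color switch (blue $n$-star inside $Z$, blue link along $Q$) the bookkeeping you worry about does not close.
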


\begin{proof}
Clearly it is enough to prove the lemma for $r = n + 3p + 3$. If $|S| \leq p-1$, then by the definition of $S$, it implies that $\forall z \in Z$, $\text{blue-}d_Q(z) \leq p-1$ and so $\text{red-}d_Q(z) \geq p+3$. Now observe that $$p+3 \leq |Q| \leq 2(p+3) - 2.$$ Therefore, by Lemma \ref{Jackson - 1}, any subset of $p+1$ vertices in $Z$ will define a red cycle of length $2p+2$ between that subset and $Q$.

Now suppose there is some vertex $z \in Z$, satisfying $\text{red-}d_Z(z) \geq p$. Let $Z' \subset Z$ be a set of $p+1$ vertices in $Z \backslash \{ z \}$, such that $\text{red-}d_{Z'}(z) \geq p$. By the above reasoning, there must exist a red cycle of length $2p+2$ between $Z'$ and $Q$, call this red cycle $Q'$. Since $z \notin Q'$, and $\text{red-}d_{Q'}(z) \geq p$, we know that $|S_{Q'}| \geq p$, contradicting the maximality of $Q$. Therefore, we may assume that $\forall z \in Z$, $\text{red-}d_Z(z) \leq p-1$, and so $\text{blue-}d_Z(z) \geq n+1$.

Similarly, suppose there is some $z \in Z$ such that $\text{red-}d_Q(z) = 2p + 2$. Let $Z' \subset Z$ be some set of $p+1$ vertices in $Z \backslash \{ z \}$. There must exist a red cycle of length $2p + 2$ between $Z'$ and $Q$, call this red cycle $Q'$. Since $z \notin Q'$, and $\text{red-}d_{Q'}(z) \geq p + 1$, we know that $|S_{Q'}| \geq p+1$, contradicting the maximality of $Q$. Therefore, we may assume that $\forall z \in Z$, $\text{blue-}d_Q(z) \geq 1$, and so every $z \in Z$ must be connected to some $q \in Q$ by a blue edge.

Consider some fixed vertex $z \in Z$. Since $\forall z \in Z$ we may assume that $\text{blue-}d_Z(z) \geq n+1$, $z$ is the center of (at least) a blue $n + 1$ star. Furthermore, let $q \in Q$ be a vertex such that the edge $(z, q)$ is blue, which must exist since $\forall z \in Z$, $\text{blue-}d_Q(z) \geq 1$. Connecting $z$ to $q$ and following $Q$ for $2p$ vertices from $q$ defines a blue path of length $2p+1$ that begins at $z$ and ends at some $q' \in Q$. Observe that this path has an $n+1$-star at $z$, so if $\text{blue-}d_{Z \backslash \{ z \}}(q') \geq 1$ then this path would define a blue $S_c(n,2)$, with $2$-star at $q'$ using one vertex in $Z \backslash \{ z \}$ and the vertex on $Q$ following $q'$. Therefore we may assume $\text{blue-}d(q')_{Z \backslash \{ z \}} = 0$, and so $\text{red-}d(q')_{Z \backslash \{ z \}} = n + p$. Now select some arbitrary set of $p + 1$ vertices from these $n + p$, call this set $A$. Since $\forall z \in Z$ satisfy $\text{red-}d_Q(z) \geq p + 3$, it follows that $\text{red-}d_{Q \backslash \{ q'  \}} \geq p + 2$. But now, since $$p + 2 \leq |Q \backslash \{ q' \}| \leq 2(p + 2) - 2$$ there must be a red cycle $Q'$ of length $2p + 2$ between $A$ and $Q \backslash \{ q' \}$. Since $q' \notin Q'$, and $\text{red-}d_{Q'}(q') \geq p + 1$, we know that $|S_{Q'}| \geq p+1$, contradicting the maximality of $Q$.
\end{proof}

\begin{lemma}\label{|T_C|}
Let $C$ be a monochromatic cycle of length $2p+2$. Then $\forall i$, $|T_C^i| \geq |S_C|$.
\end{lemma}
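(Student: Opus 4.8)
The plan is to identify the vertices of $C$ with the cyclic group $\mathbb{Z}/(2p+2)\mathbb{Z}$, writing $C = (c_0, c_1, \ldots, c_{2p+1})$ so that $c_j$ and $c_{j+1}$ (indices taken mod $2p+2$) are consecutive on $C$. Under this labeling I identify $S_C$ with the set of indices $\{s : c_s \in S_C\} \subseteq \mathbb{Z}/(2p+2)\mathbb{Z}$, and I regard the parameter $i$ as ranging over $\{0, 1, \ldots, 2p+1\}$, the values for which a path of length $i$ on $C$ exists.

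First I would unwind the definition of $T_C^i$. A path of length $i$ along $C$ starting at a vertex $c_s$ ends at either $c_{s+i}$ or $c_{s-i}$, since the cycle offers exactly two directions in which to travel. Hence $c_j$ lies in $T_C^i$ precisely when $j \equiv s + i$ or $j \equiv s - i \pmod{2p+2}$ for some $s \in S_C$; that is,
$$T_C^i = (S_C + i) \cup (S_C - i),$$
where $S_C \pm i$ denote the two translates of $S_C$ by $\pm i$ in $\mathbb{Z}/(2p+2)\mathbb{Z}$.

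The key observation is that translation by a fixed element of $\mathbb{Z}/(2p+2)\mathbb{Z}$ is a bijection of the group onto itself, so $|S_C + i| = |S_C|$. Since $T_C^i$ contains $S_C + i$ as a subset, it follows immediately that $|T_C^i| \geq |S_C + i| = |S_C|$, which is the desired inequality.

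There is essentially no obstacle here; the only point requiring care is the correct reading of ``path of length $i$ on $C$'', namely that it admits two directions and therefore yields two translates rather than one. I note in passing (though it is not needed for the bound) that the two translates are guaranteed to coincide when $2i \equiv 0 \pmod{2p+2}$, i.e.\ when $i \equiv 0$ or $i \equiv p+1$, in which cases $|T_C^i| = |S_C|$ exactly; for other values of $i$ the union may be strictly larger, but the one-sided containment already establishes the claim.
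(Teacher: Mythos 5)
Your proof is correct and is essentially the paper's argument: the paper fixes one direction around $C$ and observes that following $C$ for $i$ vertices from each $s \in S_C$ maps $S_C$ injectively into $T_C^i$, which is precisely your single translate $S_C + i$ (up to the paper's convention that ``length'' counts vertices rather than edges, so the shift is by $i-1$ instead of $i$ --- immaterial to the bound). Your group-theoretic phrasing and the closing remark about the second translate are harmless embellishments of the same idea.
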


\begin{proof}
Every vertex in $S_C$ contributes (at least) one unique vertex to $T_C^i$. Simply fix a direction for following $C$, and follow $C$ for $i$ vertices starting from each vertex in $S_C$. In this manner, each vertex $s \in S_C$ will map onto a unique vertex that (by the definition of $T_C^i$) must belong to $T_C^i$. Therefore, $|T_C^i| \geq |S_C|$. 
\end{proof}

\begin{lemma}\label{4-neighbors}
Suppose $C$ is a cycle on $2p+2$ vertices. If $|S_C| = |T_C^{2p}|$, then $|S_C| \geq p+1$.
\end{lemma}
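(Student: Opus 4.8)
The plan is to work entirely on the cycle $C$, which I identify with $\mathbb{Z}_{2p+2}$ by labelling its vertices $0, 1, \ldots, 2p+1$ in cyclic order, and to reduce the hypothesis $|S_C| = |T_C^{2p}|$ to a periodicity statement about the subset $S_C$. The first step is to pin down $T_C^{2p}$ explicitly. Between any two vertices of $C$ there are exactly two arcs of complementary lengths, so a path of length $2p$ joins precisely those pairs of vertices one of whose arcs has length $2p$; since $C$ has $2p+2$ vertices, the complementary arc then has a fixed small length $t$ (the exact value of $t$ is the one bookkeeping point one must get right from the length convention). Consequently every vertex of $T_C^{2p}$ is obtained from a vertex of $S_C$ by a forward or backward shift of $t$, that is,
\[ T_C^{2p} = (S_C + t) \cup (S_C - t), \]
where addition is modulo $2p+2$.

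Next I would analyse the equality case. Each shift map $s \mapsto s \pm t$ is a bijection of $\mathbb{Z}_{2p+2}$, so each carries $S_C$ to a subset of $T_C^{2p}$ of size $|S_C|$; this already recovers Lemma \ref{|T_C|}, namely $|T_C^{2p}| \geq |S_C|$. By inclusion–exclusion,
\[ |T_C^{2p}| = 2|S_C| - |(S_C + t) \cap (S_C - t)|, \]
so the hypothesis $|T_C^{2p}| = |S_C|$ forces $|(S_C+t)\cap(S_C-t)| = |S_C|$. Since this intersection sits inside $S_C + t$, which has exactly $|S_C|$ elements, the two sets coincide: $S_C + t = S_C - t$. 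Shifting, this says $S_C$ is invariant under translation by $2t$, i.e. $S_C + 2t = S_C$; equivalently, $S_C$ is a (nonempty) union of orbits of the cyclic subgroup $\langle 2t \rangle \leq \mathbb{Z}_{2p+2}$, where nonemptiness comes from $S_C$ containing the vertices of $C$ adjacent to $u_C$.

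Finally I would extract the numerical bound from this periodicity. Being a nonempty union of $\langle 2t\rangle$-orbits, $S_C$ has size a positive multiple of $|\langle 2t \rangle| = (2p+2)/\gcd(2t,\,2p+2)$. The content of the lemma is therefore that this orbit has at least $p+1$ elements, which I would verify by computing $\gcd(2t,\,2p+2)$ and checking that the subgroup $\langle 2t\rangle$ has index at most two in $\mathbb{Z}_{2p+2}$, so that its order is at least $(2p+2)/2 = p+1$.

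The step I expect to be the main obstacle is precisely this last gcd/orbit-size computation: the bound $|S_C| \geq p+1$ is \emph{exactly} the assertion that the relevant translation subgroup has index at most two, and whether that holds is sensitive to the parity of $p$ and to the exact displacement $t$ produced by a length-$2p$ path. I would therefore be most careful about (a) nailing down $t$ from the precise convention for the length of a path on $C$, and (b) treating $2p+2$ modulo $\gcd(2t,\,2p+2)$ by cases, since it is here — rather than in the clean algebraic reduction $S_C + 2t = S_C$ — that the genuine arithmetic of the statement lives.
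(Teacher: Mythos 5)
Your reduction is correct up to its final step, and that part runs parallel to the paper's own opening move. Under the paper's convention that a path of length $i$ has $i$ vertices (hence $i-1$ edges), the displacement realized on a $(2p+2)$-cycle by a path of length $2p$ is $2p-1 \equiv -3 \pmod{2p+2}$, so $t = 3$ and $T_C^{2p} = (S_C+3)\cup(S_C-3) = T_C^{4}$, exactly the identification made in the first line of the paper's proof. Your inclusion--exclusion argument then correctly forces $S_C+3 = S_C-3$, i.e.\ $S_C + 6 = S_C$, which is the same equality-case conclusion the paper phrases as ``every forward $4$-neighbor is also a backward $4$-neighbor.''

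The gap is precisely the step you flagged as the expected obstacle, and it cannot be closed: the index of $\langle 2t\rangle = \langle 6\rangle$ in $\mathbb{Z}_{2p+2}$ is $\gcd(6,\,2p+2) = 2\gcd(3,\,p+1)$, which equals $6$ whenever $3 \mid p+1$, so the orbits then have size only $(p+1)/3$ and your argument yields $|S_C| \geq (p+1)/3$ rather than $|S_C| \geq p+1$. This is not a bookkeeping issue about the value of $t$: for $p=5$ the set $S_C = \{0,6\} \subset \mathbb{Z}_{12}$ satisfies $S_C + 6 = S_C$ and gives $T_C^{2p} = \{3,9\}$, so $|T_C^{2p}| = |S_C| = 2 < 6 = p+1$. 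Hence the periodicity $S_C + 2t = S_C$ alone does not imply the bound, and any complete proof must bring in something more. The paper takes a different route after the equality-case reduction: it argues that $C\setminus S_C$ can contain no four consecutive vertices and that every shorter run of $C\setminus S_C$ is flanked by runs of $S_C$ at least as long, whence $|S_C|\geq |C\setminus S_C| = 2p+2-|S_C|$. You should note, however, that the configuration above (a run of five consecutive vertices of $C\setminus S_C$ with $|T_C^4| = |S_C|$) is also in tension with that claim, so your algebraic analysis in fact isolates a real issue: as a statement about an arbitrary subset $S_C$ of the cycle, the inequality fails for periodic sets of this kind, and one must identify what additional property of $S_C = B(u_C)\cap C$ excludes them.
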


\begin{proof}
First, observe that $T_C^{2p} = T_C^{4}$, since as $C$ is a cycle, any two vertices that form the endpoints of a path of length $2p$ on $C$ also form a path of length $4$ on $C$. It is therefore equivalent to prove the result for $T_C^4$.

Consider some fixed $s \in S_C$. Observe that $s$ defines exactly two vertices that belong to $T_C^4$: one obtained by following $C$ for $4$ vertices starting from $s$ in the ``forward" direction, and the other by following $C$ in the ``backward" direction. Call these two vertices the forward and backward $4$-neighbors of $s$ respectively. Since in this manner every $s \in S$ defines two vertices in $T_C^4$, the only way that $|S_C| = |T_C^4|$ is if every forward $4$-neighbor of a vertex in $S$ is \emph{also} a backward $4$-neighbor of some other vertex in $S$.

Now observe that if there are any four consecutive vertices $x_1, x_2, x_3, x_4 \in C \backslash S_C$, then $|T_C^4| > |S_C|$, since $x_4$ is a forward $4$-neighbor of $x_1$, and $x_4 \notin S$. This implies that if there are three adjacent vertices $x_1, x_2, x_3 \in C \backslash S_C$ then $S_C$ must contain the three vertices before $x_1, x_2, x_3$ and the three vertices after $x_1, x_2, x_3$. Similarly if there are two adjacent vertices $x_1,x_2 \in C \backslash S_C$ then $S_C$ must contain the three vertices before and after $x_1, x_2$. But now $C \backslash S_C$ consists of a sequence of intervals of $C$ of length one, two, or three, and each interval is followed by an interval in $S$ of equal or greater length, so $|S_C| \geq |C \backslash S_C|$. Since $|S_C| + |C \backslash S_C| = |C| = 2p + 2$, this necessitates that $|S_C| \geq p + 1$.
\end{proof}

\begin{corollary}\label{4-neighbor-cor}
If $C$ is a cycle on $2p+2$ vertices with $|S_C| \leq p$, then $|T_C^{2p}| \geq |S_C| + 1$.
\end{corollary}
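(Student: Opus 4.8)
The plan is to derive this corollary directly from the two immediately preceding results, since it is essentially their contrapositive packaged into a single clean inequality. Lemma~\ref{|T_C|} already supplies the weak bound $|T_C^{2p}| \geq |S_C|$ for any monochromatic cycle $C$ of length $2p+2$, so the entire content of the corollary is to upgrade this to a \emph{strict} inequality under the extra hypothesis $|S_C| \leq p$.

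First I would invoke Lemma~\ref{|T_C|} to record that $|T_C^{2p}| \geq |S_C|$. The only way the desired conclusion $|T_C^{2p}| \geq |S_C| + 1$ could fail is therefore if equality holds, i.e. $|T_C^{2p}| = |S_C|$. I would then argue by contradiction: assuming this equality, Lemma~\ref{4-neighbors} applies verbatim, since its hypothesis is precisely that $C$ is a cycle on $2p+2$ vertices with $|S_C| = |T_C^{2p}|$, and it forces $|S_C| \geq p+1$. This contradicts the standing hypothesis $|S_C| \leq p$.

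Consequently we must have $|T_C^{2p}| \neq |S_C|$, and combining this with the weak bound from Lemma~\ref{|T_C|} yields $|T_C^{2p}| \geq |S_C| + 1$, as claimed.

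Since both ingredients are already in hand, there is no substantive obstacle; the only point requiring care is to confirm that the hypotheses of Lemma~\ref{4-neighbors} are genuinely met, namely a cycle on exactly $2p+2$ vertices together with the equality $|S_C| = |T_C^{2p}|$, both of which hold under the contradiction hypothesis by construction.
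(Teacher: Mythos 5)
Your proof is correct and follows essentially the same route as the paper: apply Lemma~\ref{|T_C|} for the weak bound $|T_C^{2p}| \geq |S_C|$, then rule out equality via Lemma~\ref{4-neighbors} since equality would force $|S_C| \geq p+1$, contradicting $|S_C| \leq p$. The paper states this directly rather than as an explicit contradiction, but the content is identical.
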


\begin{proof}
By Lemma \ref{|T_C|}, we know $|T_C^{2p}| \geq |S_C|$. However, as shown in Lemma \ref{4-neighbors}, equality can only occur if $|S_C| \geq p+1$, so if $|S_C| \leq p$ then $|T_C^{2p}| \geq |S_C| + 1$.
\end{proof}

\begin{lemma}\label{S=p}
If $|S| = p$, and $n \leq p$, then every such $2$-coloring of a $K_r$ admits a monochromatic $S_c(n,2)$, for $r \geq n + 3p + 3$.
\end{lemma}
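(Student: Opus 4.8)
My plan is to follow the template of Lemma~\ref{S<p}, but to upgrade every contradiction of the form $|S_{Q'}|\ge p$ to one of the form $|S_{Q'}|\ge p+1$, since the maximality of $Q$ now only rules out monochromatic $(2p+2)$-cycles $C$ with $|S_C|\ge p+1$. As there, it suffices to treat $r=n+3p+3$, so that $|Z|=n+p+1$. The first task is to record the ambient degree bounds. Since $u$ maximizes $\text{blue-}d_Q$ over $Z$ and $\text{blue-}d_Q(u)=|S|=p$, every $z\in Z$ has $\text{red-}d_Q(z)\ge p+2$; as $|Q|=2p+2\le 2(p+2)-2$, Lemma~\ref{Jackson - 1} then lets any $p+1$ vertices of $Z$ span a red $(2p+2)$-cycle with $Q$. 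If some $z$ had $\text{red-}d_Z(z)\ge p+1$, building such a cycle $Q'$ from $p+1$ of its red neighbours in $Z$ would give $\text{red-}d_{Q'}(z)\ge p+1$ and hence $|S_{Q'}|\ge p+1$, a contradiction; so $\text{red-}d_Z(z)\le p$ and $\text{blue-}d_Z(z)\ge n$ for every $z$. The same trick applied to a vertex red to all of $Q$ shows every $z$ has a blue neighbour in $Q$.

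For the construction I would anchor the link at $u$ and use Corollary~\ref{4-neighbor-cor}, which gives $|T^{2p}|\ge p+1$ since $|S|=p$. Each $t\in T^{2p}$ comes with a witness $s\in S$ and a blue arc of $2p$ vertices of $Q$ from $s$ to $t$; prepending the blue edge $us$ yields a blue link on $2p+1$ vertices with centres $u$ and $t$. I place the $n$-star at $u$, using its $\ge n$ blue neighbours in $Z$ (all disjoint from the link), and a $2$-star at $t$. One spoke of that $2$-star is free: the vertex of $Q$ adjacent to $t$ but off the arc is an unused blue neighbour of $t$. Thus it suffices to find, for some $t$, a second available blue neighbour of $t$ --- either the remaining off-arc vertex of $Q$ or a blue neighbour of $t$ in $Z$ disjoint from the $n$ chosen spokes at $u$ --- which completes a blue $S_c(n,2)$.

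If no such $t$ exists, I would aim to show that every $t\in T^{2p}$ is red to all of $Z$, and then take any $T'\subseteq T^{2p}$ with $|T'|=p+1$. Because each $t\in T'$ is all-red to $Z$ and $|Z|=n+p+1\le 2|Z|-2$, Lemma~\ref{Jackson - 1} produces a red $(2p+2)$-cycle $Q'$ between $T'$ and $Z$ that uses all of $T'$. Since $|Z\setminus Q'|=n\ge 2$, pick $z^{\ast}\in Z\setminus Q'$; as $z^{\ast}$ is red to every vertex of $T'\subseteq Q'$, we get $\text{red-}d_{Q'}(z^{\ast})\ge p+1$, so $|S_{Q'}|\ge p+1$, contradicting the maximality of $Q$. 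This closes the argument whenever the failure of each $t$ forces it to be all-red to $Z$.

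The delicate point --- and the step I expect to be hardest --- is exactly that last implication. Mirroring Lemma~\ref{S<p} would be automatic if we had $\text{blue-}d_Z(u)\ge n+1$, for then any single blue $Z$-neighbour of $t$ could be freed from the $n$-star at $u$; but here we only secure $\text{blue-}d_Z(u)\ge n$. In the tight boundary case $\text{red-}d_Z(u)=p$ (so $\text{blue-}d_Z(u)=n$ exactly), a vertex $t$ may fail while still having blue $Z$-neighbours, provided they all lie inside the $n$-element set $B(u)\cap Z$; such a $t$ is only guaranteed $\text{red-}d_Z(t)\ge p$, and the $Z$-vertices red to all of the relevant $t$'s then number exactly $p+1$, leaving no external $z^{\ast}$ and reintroducing an off-by-one in the cycle/maximality bookkeeping. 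Resolving this tight case is the crux: I would attack it by exploiting that all offending blue neighbours are concentrated in the single set $B(u)\cap Z$, ranging $t$ over the $\ge p+1$ choices in $T^{2p}$ and $s$ over $S$ to vary the off-arc vertices, and, if necessary, relocating the $n$-star to another $Z$-vertex whose blue $Z$-degree exceeds $n$. Reconciling the per-vertex thresholds demanded by Lemma~\ref{Jackson - 1} with this exact degree tightness is where the real work lies.
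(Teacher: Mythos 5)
Your outline follows the paper's strategy and you have correctly located the crux, but you leave it unresolved, so the proof is incomplete. The missing idea is that the hypothesis $|S|=p$ can be exploited twice: once through the maximality of $Q$ among monochromatic $(2p+2)$-cycles, and once through the maximality of $u$'s blue degree into $Q$. Concretely, suppose some $z\in Z$ has $\text{red-}d_Z(z)\ge p$ (not merely $\ge p+1$). Take $A\subset Z\setminus\{z\}$ of size $p+1$ containing at least $p$ red neighbours of $z$, and let $Q'$ be the red $(2p+2)$-cycle between $A$ and $Q$ given by Lemma~\ref{Jackson - 1}; it uses all of $A$. If $z$ has even one red edge into $Q'\cap Q$, then $\text{red-}d_{Q'}(z)\ge p+1$ with $z\notin Q'$, contradicting the maximality of $Q$; otherwise $z$ is blue to all $p+1$ vertices of $Q'\cap Q$, so $\text{blue-}d_Q(z)\ge p+1>|S|$, contradicting the choice of $u$. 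Hence $\text{red-}d_Z(z)\le p-1$ and $\text{blue-}d_Z(z)\ge n+1$ for every $z\in Z$ --- one more than the bound you derive, and exactly the ``$+1$'' whose absence derails your endgame. Your proposed repairs (varying $t$ over $T^{2p}$, relocating the $n$-star) do not obviously close the tight case, whereas this dichotomy eliminates it at the source.

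With $\text{blue-}d_Z(z)\ge n+1$ in hand, the construction is also cleaner if you orient the link the other way. For $z\in Z\setminus\{u\}$ and $t\in T^{2p}$ joined by a blue edge, the path $z,t,\dots,s$ (following $Q$ for $2p$ vertices from $t$ to the witness $s\in S$) is a blue link on $2p+1$ vertices; put the $2$-star at $s$ using $u$ and the unused $Q$-neighbour of $s$, and the $n$-star at $z$ using $n$ of its $\ge n+1$ blue neighbours in $Z$ (discarding $u$ if necessary). This sidesteps your problematic second spoke at $t$: note that the ``other'' off-arc vertex of $Q$ is adjacent on $Q$ to $s$'s side of the arc, not to $t$, so it is not actually an available blue spoke for $t$. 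If no blue edge joins $Z\setminus\{u\}$ to $T^{2p}$, then these two sets span a red complete bipartite graph with parts of sizes $n+p\ge p+2$ and $\ge p+1$ (Corollary~\ref{4-neighbor-cor}), which contains a red $(2p+2)$-cycle plus a leftover vertex of $Z\setminus\{u\}$ red to all $p+1$ cycle vertices in $T^{2p}$, contradicting the maximality of $Q$ exactly as in your final step.
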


\begin{proof}
Clearly it is enough to prove the lemma for $r = n + 3p + 3$. If $|S| = p$, then by the definition of $S$, it implies that $\forall z \in Z$, $\text{blue-}d_Q(z) \leq p$ and so $\text{red-}d_Q(z) \geq p + 2$. Now observe that $$p+2 \leq |Q| \leq 2(p+2) - 2.$$ Therefore, by Lemma \ref{Jackson - 1}, any subset of $p+1$ vertices in $Z$ will define a red cycle of length $2p + 2$ between that subset and $Q$.

Suppose that there was some $z \in Z$ satisfying $\text{red-}d_Z(z) \geq p$. Let $A \subset Z$ be some subset of $p + 1$ vertices in $Z \backslash \{ z \}$, including at least $p$ vertices from $R(z) \cap Z$. By Lemma \ref{Jackson - 1} there must be a red cycle $Q'$ of length $2p+2$ between $A$ and $Q$. Now if $z$ is connected to a vertex in $Q' \cap Q$ by a red edge, then $|S_{Q'}| \geq p+1$ (since $z \notin Q'$, and $\text{red-}d_A(z) \geq p$) contradicting the presumed maximality of $Q$. Therefore, we may assume that $z$ is connected to every vertex in $Q' \cap Q$ by a blue edge. But now, $\text{blue-}d_Q(z) \geq p+1$, and so $|S| \geq p+1$, contradicting the assumption that $|S| = p$.

Thus, $\forall z \in Z$ we may assume that $\text{red-}d_Z(z) \leq p-1$ and so $\text{blue-}d_Z(z) \geq n+1$. Let $Z' = Z \backslash \{ u \}$. Suppose there as some vertex $z \in Z'$ that was connected to some vertex in $t \in T^{2p}$ by a blue edge. Then following $z$ to $t$, and following $Q$ for a further $2p$ vertices from $Q$ will end at some $s \in S$. This path forms the link of a blue $S_c(n,2)$, where we use $u$ and the remaining vertex on $Q$ that is blue-adjacent to $s$ to complete the $2$-star at $s$, leaving at least $n$ vertices to form the blue $n$-star at $z$ (note that since $z \in Z'$, it follows $z \neq u$). Thus we may assume there is a red complete bipartite graph between $Z'$ and $T^{2p}$. Since (by Corollary \ref{4-neighbor-cor}) $|T^{2p}| \geq p + 1$, and 
\begin{align*}
    |Z'| &= n + p\\ 
    &\geq p + 2,
\end{align*}
this complete bipartite graph contains a red cycle of length $2p + 2$, which leaves one vertex unused in $Z'$ which has red degree into the cycle $\geq p+1$, again contradicting the assumption of the maximality of $Q$.
\end{proof}

\begin{lemma}\label{red degree T^{2p+1}}
Suppose $C$ is a blue cycle of length $2p + 2$. If any vertex $t \in T_C^{2p+1}$ has $\text{blue-}d_{Z_C}(t) \geq n + 1$, then this $2$-coloring admits a blue $S_c(n,2)$. The symmetric result holds if $C$ is instead a red cycle of length $2p+2$.
\end{lemma}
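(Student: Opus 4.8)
The plan is to translate the hypothesis $t \in T_C^{2p+1}$ into a concrete position on the cycle and then read off the blue $S_c(n,2)$ directly. First I would unpack the membership: by definition there is a vertex $s \in S_C$ joined to $t$ by a path on $2p+1$ vertices lying along $C$. Since $C$ is a cycle on $2p+2$ vertices, this path is one of the two arcs between $s$ and $t$ and uses $2p$ edges, so the complementary arc uses only two edges. Hence $s$ and $t$ sit at cyclic distance two, and the short arc between them contains exactly one intermediate vertex, which I will call $w$. Writing $L$ for the long arc, $L$ is a path on exactly $2p+1$ vertices from $t$ to $s$, all of whose edges are edges of the blue cycle $C$ and hence blue; note also that $w \notin L$.

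With this picture in hand I would assemble the blue $S_c(n,2)$ from three disjoint pieces, using $L$ itself as the link (it has precisely $c = 2p+1$ vertices and has $t$ and $s$ as its two endpoints). I would place the $n$-star at $t$: since $\text{blue-}d_{Z_C}(t) \geq n+1$, the vertex $t$ has at least $n+1$ blue neighbours in $Z_C = V \setminus C$, so I may choose $n$ of them that avoid $u_C$ (recall $u_C \in Z_C$) to be the spokes. Finally I would place the $2$-star at the far endpoint $s$, taking as its two spokes the vertex $u_C$, which is blue-adjacent to $s$ because $s \in S_C = B(u_C) \cap C$, and the intermediate vertex $w$, which is joined to $s$ by a blue cycle edge.

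To close the argument I would check that the three pieces are pairwise disjoint and monochromatic. The link lies on $C$; the $n$ spokes lie in $Z_C \setminus \{u_C\}$; and the two $2$-spokes are $u_C \in Z_C$ and $w \in C \setminus L$. Thus $w$ is off the link, the $n$-spokes avoid both $C$ and $u_C$, and no vertex is reused; every edge employed is a cycle edge, the edge $u_C s$, or a blue edge from $t$ into $Z_C$, so the whole configuration is blue. This produces a genuine blue $S_c(n,2)$, and the red case follows by interchanging the two colours throughout (replacing $B(\cdot)$ by $R(\cdot)$ and blue by red everywhere).

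The one genuinely delicate point is the identification in the first step: it is essential that $t \in T_C^{2p+1}$ places $t$ at cyclic distance \emph{two} from a vertex of $S_C$, rather than adjacent to it, since it is exactly the single freed-up vertex $w$ on the short arc that furnishes the second spoke of the $2$-star, while the membership $s \in S_C$ furnishes the first via $u_C$. Securing this second spoke is the crux of the whole construction; everything else is bookkeeping, and the only role of the inequality $\text{blue-}d_{Z_C}(t) \geq n+1$ (rather than $\geq n$) is to guarantee that the $n$ spokes of the large star can be chosen to avoid $u_C$.
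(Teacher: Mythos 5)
Your proposal is correct and follows essentially the same route as the paper's proof: the link is the long arc of $C$ from $t$ to a vertex $s \in S_C$, the $2$-star at $s$ is formed from $u_C$ and the single leftover cycle vertex, and the $n$-star at $t$ uses $n$ of its $n+1$ blue neighbours in $Z_C$ after reserving $u_C$. Your version merely makes explicit the cyclic-distance-two observation and the disjointness bookkeeping that the paper leaves implicit.
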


\begin{proof}
Let $C$ be a blue cycle of length $2p+2$, and suppose there existed a $t \in T_C^{2p+1}$ such that $\text{blue-}d_{Z_C}(t) \geq n + 1$. By the definition of $T_C^{2p+1}$, following $C$ for $2p+1$ vertices from $t$ will end at some $s \in S_C$. Observe now that there exists a $2$-star at $s$, using $u$, and the remaining vertex on $Q$ that immediately follows $s$. This leaves (at least) $n$ vertices unused in $B(t) \cap Z_C$ to form the $n$-star at $t$, so this path on $Q$ defines a blue $S_c(n,2)$.
\end{proof}

\begin{lemma}\label{min-degree 1}
Suppose $C$ is a blue cycle of length $2p+2$. Let $T = \{ c \in C : \text{red-}d_{Z_C}(c) \geq p+1 \}$. If $|S_C| \geq p+1$ and $|T| \geq p+2$, then if there exists a vertex $c \in C$ such that $\text{blue-}d_{Z_C}(c) \geq n+1$, then this $2$-coloring admits a monochromatic $S_c(n,2)$. The symmetric result also holds if $C$ is instead a red cycle of length $2p+2$.
\end{lemma}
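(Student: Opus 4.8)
The plan is to aim for a blue $S_c(n,2)$ first and, when that fails, show the coloring is forced to be red-dense enough to produce a red $S_c(n,2)$. Write $c_0$ for the vertex with $\text{blue-}d_{Z_C}(c_0) \geq n+1$, and note $|Z_C| = (n+3p+3)-(2p+2) = n+p+1$. If $c_0 \in T_C^{2p+1}$, then Lemma \ref{red degree T^{2p+1}} immediately yields a blue $S_c(n,2)$, so assume $c_0 \notin T_C^{2p+1}$ and suppose for contradiction that the coloring avoids every monochromatic $S_c(n,2)$. By the contrapositive of Lemma \ref{red degree T^{2p+1}}, no vertex of $T_C^{2p+1}$ can have blue degree at least $n+1$ into $Z_C$; since $|Z_C| = n+p+1$, every such vertex instead has $\text{red-}d_{Z_C}\geq p+1$, that is, $T_C^{2p+1} \subseteq T$. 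The hypothesis $|S_C| \geq p+1$ is what makes $T_C^{2p+1}$ large (Lemma \ref{|T_C|} gives $|T_C^{2p+1}| \geq |S_C| \geq p+1$), so this already exhibits many red-rich vertices on $C$; together with $|T| \geq p+2$ we obtain a dense red bipartite graph between $T$ and $Z_C$ in which every $T$-vertex has red degree at least $p+1$.

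Next I would extract a red link. Since $n \leq p-2$ forces $|Z_C| = n+p+1 \leq 2p$, a $(p+1)$-subset of $T$ has red degree at least $p+1 = k$ into $Z_C$ with $a = p+1 \leq k$ and $b = n+p+1 \leq 2k-2$, so Lemma \ref{Jackson - 1} produces a red cycle $C'$ of length $2p+2$ alternating between $T$ and $Z_C$. Deleting one $Z_C$-vertex of $C'$ leaves a red path on $2p+1$ vertices whose two endpoints lie in $T$; this path is the candidate link. To finish I would attach a red $2$-star at one endpoint and a red $n$-star at the other, taking their spokes from the red neighbours in $Z_C$ not used by the link. The slack $n \leq p-2$ is exactly what makes this feasible: each endpoint has at least $p+1$ red neighbours in $Z_C$, the link consumes only $p$ of them, and since at most $n$ of an endpoint's non-neighbours can be forced onto the link, the link can be routed so that at least $n+1$ red neighbours of the $n$-star endpoint (and $2$ of the $2$-star endpoint) survive off the link.

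The main obstacle is precisely this conversion step: Lemma \ref{Jackson - 1} delivers a red $(2p+2)$-cycle but gives no control over which $Z_C$-vertices appear on it, so one cannot freely reserve an endpoint's private red neighbours in advance. I would handle this by reserving the $n+2$ star-spokes first and then applying Lemma \ref{Jackson - 1} to the reduced bipartite graph to route the link through the remaining vertices, checking that the degree hypotheses survive the deletion (they do, with room to spare, because $n \leq p-2$). As a fallback one can try the maximality route of Lemma \ref{S=p}: a $T$-vertex $t^*$ left off $C'$ retains $\text{red-}d_{C'}(t^*)\geq p+1-n$, and one would attempt to push this, using the extra leftover vertices guaranteed by $|T|\geq p+2$, to a monochromatic $(2p+2)$-cycle whose primed vertex contradicts the maximality of $Q$; here the delicate point is forcing the primed-vertex degree to exceed $|S_Q|\geq p+1$ rather than merely reach $p+1$. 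I expect the bookkeeping that balances the reserved spokes against the link while keeping every degree inequality tight-but-valid to be the part requiring the most care. The symmetric statement for a red cycle $C$ follows by interchanging the two colours throughout.
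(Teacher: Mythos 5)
There is a genuine gap at the central step of your argument, namely attaching the stars to the endpoints of the red link. After Jackson's lemma gives a red $(2p+2)$-cycle alternating between $T$ and $Z_C$, your link (a path on $2p+1$ vertices with both endpoints in $T$) consumes $p$ vertices of $Z_C$, leaving only $n+1$ vertices of $Z_C$ unused. An endpoint $t \in T$ is guaranteed only $\text{red-}d_{Z_C}(t) \geq p+1$, i.e.\ it may have as many as $n$ non-red-neighbours in $Z_C$; in the worst case all $n$ of these lie among the $n+1$ unused vertices, leaving a single usable red spoke. That is not enough for either the $2$-star or the $n$-star, and you have no control over which $Z_C$-vertices Jackson's cycle selects. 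Your proposed repair -- reserving $n+2$ spokes in $Z_C$ before invoking Lemma \ref{Jackson - 1} -- does not ``survive with room to spare'': it leaves only $|Z_C| - (n+2) = p-1$ vertices on the $Z_C$ side, and a cycle of length $2p+2$ alternating with $T$ needs $p+1$ vertices on that side, so the Jackson hypotheses ($b \geq k \geq a = p+1$) are destroyed outright. The fallback via maximality of $Q$ also does not close the gap: the lemma is stated for an arbitrary cycle $C$ (and is applied in Lemma \ref{S>p} to a modified cycle $Q''$, not to $Q$ itself), and the residual degree $p+1-n$ you cite is far below the $p+2$ needed to contradict $|S_Q| \geq p+1$.

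The paper's proof avoids this counting trap by exploiting the specific vertex $c_1$ of high blue degree. It first forces the two vertices $c_3, c_{2p+1}$ at ``link distance'' $2p+1$ from $c_1$ to be red-\emph{complete} to $Z_C$ (else a blue $S_c(n,2)$ appears with its $n$-star at $c_1$), which gives those two vertices $n+p+1$ red neighbours in $Z_C$ -- enough slack to serve as star centres after a link through $Z_C$ is routed. When that construction is blocked, the blue chords at $c_3$ and $c_{2p+1}$ force every vertex of $C \setminus \{c_1\}$ to have red degree $\geq n+p$ into $Z_C$, and the endgame places the $n$-star at a vertex $z' \in Z_C$ with its spokes on $C$ rather than in $Z_C$, which is exactly what sidesteps the shortage of unused $Z_C$-vertices that defeats your construction. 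You would need some analogue of one of these two devices (a red-complete endpoint, or a star centred in $Z_C$ with spokes on $C$) to make your approach work.
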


\begin{proof}
Suppose there existed a vertex $c_1 \in C$, with $\text{blue-}d_{Z_C}(c) \geq n+1$. Let $c_3$ and $c_{2p+1}$ be the vertices found by following $C$ for $2p+1$ vertices starting at $c_1$. If either of $c_3$ or $c_{2p+1}$ has a blue edge into $Z_C$, then it would complete a blue $S_c(n,2)$, with the $n$-star at $c_1$ and the $2$-star at either $c_3$ or $c_{2p+1}$ (where we have used $c_2$ or $c_{2p+2}$ to complete the $2$-star at $c_3$ or $c_{2p+1}$, respectively). Therefore we may assume that $c_3$ and $c_{2p+1}$ are connected to every $z \in Z_C$ by a red edge.

Suppose that $c_{2p+1}$ was connected to some vertex $c' \in C \backslash \{ c_3 \}$ by a red edge. Let $T' = T \backslash \{ c_{2p+1}, c' \}$. Since $|T| \geq p+2$, we have $|T'| \geq p$. Since $T' \subseteq T$, we know that $\forall t \in T'$, $\text{red-}d_{Z_C}(t) \geq p+1$. Since $$p+1 \leq |Z_C| \leq 2(p+1) - 2,$$ by Lemma \ref{Jackson - 1}, this implies that there exists a red cycle of length $2p$ between $T'$ and $Z_C$; call this red cycle $C'$. If $c_3 \in T'$, then $C'$ contains a path of length $2p$ with one endpoint at $c_3$, and the other endpoint at some $z_1 \in Z_C$. If $c_3 \notin T'$, then such a red path can be formed by following $C'$ for $2p-1$ vertices starting at some $z_1 \in Z_C \cap C'$; this will end at some other $z_2 \in Z_C$, to which we can append $c_3$ to form the desired red path. In either case, there exists a red path of length $2p$ between $T'$ and $Z_C$ with one endpoint at $c'$ and the other endpoint at some $z_1 \in Z_C$. Now this red path defines a red $S_c(n,2)$, as we can use $n$ of the $n+1$ unused vertices in $Z_C$ to complete the $n$-star at $c_3$, and can use $c'$ and the remaining unused vertex in $Z_C$ to complete the $2$-star at $c_{2p+1}$. Therefore we may assume that $c_{2p+1}$ must be connected to every vertex in $C \backslash \{ c_3 \}$ by a blue edge. By symmetric reasoning, we may also assume that $c_3$ is connected to every vertex in $C \backslash \{ c_{2p+1} \}$ by a blue edge. However, this implies that every vertex in $C \backslash \{ c_1 \}$ can be made the endpoint of a blue path of length $2p+1$ starting at $c_1$. Suppose that we wished to make some $c_i \in C \backslash \{ c_1 \}$ such an endpoint of a blue path starting at $c_1$. Then using $c_3$ and $c_{2p+1}$, the path $$P = \{ c_1, c_2, c_3, \ldots, c_{i-1}, c_{2p+1}, c_{2p}, \ldots, c_{i+1}, c_i \},$$ gives the desired blue path of length $2p+1$ (note a similar path exists if instead $c_i = c_2$ or $c_i = c_{2p+2}$. Thus, if any $c \in C \backslash \{ c_1 \}$ has $\text{blue-}d_{Z_C}(c) \geq 2$, there exists a blue $S_c(n,2)$ (note there is one vertex left unused on $C$, $c_{2p + 2}$, that may be used to complete the $n$-star at $c_1$, if needed). Thus we may assume that $\forall c \in C \backslash \{ c_1 \}$, $\text{blue-}d_{Z_C}(c) \leq 1$ and so $\text{red-}d_{Z_C}(c) \geq n + p$.

Now since $n < p$ we have $$\frac{(n + p)(2p+1)}{n + p + 1} > n + p + 1,$$ and so there must be a vertex $z' \in Z_C$ such that $\text{red-}d_C(z') \geq n + p + 2$. Let $W \subseteq C$ be a subset of $R(z') \cap C$ of size $n + p + 2$. Now since $$\frac{(n + p - 1) (n + p + 2)}{n + p} > 2$$there must exist a vertex $z \in Z_C \backslash \{ z' \}$ such that $\text{red-}d_W(z) \geq 3$. Let $\{ w_1, w_2, w_3 \} \subseteq R(z) \cap W$. Choose any $p$ vertices of $W$ including the vertex $w_1$, but excluding $w_2$ and $w_3$; call this set of vertices $A$. Observe that since $A \subset C$, we know $\forall a \in A$, $\text{red-}d_{Z_C \backslash \{ z' \}}(a) \geq n + p - 1 \geq p + 1$. Furthermore, since $$p + 1 \leq n + p \leq 2(p + 1) - 2,$$ by Lemma \ref{Jackson - 1} there must be a red cycle $C'$ of length $2p$ between $W$ and $Z_C \backslash \{ z' \}$ (where above we have used the fact that $n \geq 2$ and $n \leq p - 1$). Since $C'$ contains $w_1$, avoids $w_2$ and $w_3$, and $z$ is red-adjacent to $\{ w_1, w_2, w_3 \}$, this defines a red $S_{2p}(2, 0)$, with one endpoint at $ z \in Z_C \backslash \{ z' \}$ and the other endpoint somewhere in $W$. But since $W \subseteq R(z') \cap C$, we may append $z'$ to the endpoint in $W$, and since $|W| = n + p + 2$, $z'$ has sufficient degree to complete the red $S_{c}(n,2)$ without interference.

\end{proof}

\begin{corollary} \label{min-degree 2}
Let $C$ be a blue cycle of length $2p + 2$. If $|S_C| \geq p+1$ and $|T_C^{2p+1}| \geq p+2$, then if there exists a vertex $c \in C$ such that $\text{blue-}d_{Z_C}(c) \geq n + 1$, then this $2$-coloring admits a monochromatic $S_c(n,2)$.
\end{corollary}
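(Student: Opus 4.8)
The plan is to derive this corollary directly from Lemma~\ref{min-degree 1}. The two statements have identical conclusions and nearly identical hypotheses; the only difference is the set that is required to be large. Lemma~\ref{min-degree 1} asks that $T := \{ c \in C : \text{red-}d_{Z_C}(c) \geq p+1 \}$ have size at least $p+2$, whereas here we are instead handed $|T_C^{2p+1}| \geq p+2$. These two sets are a priori unrelated (one is defined by a red-degree condition into $Z_C$, the other by a path-distance condition along $C$), so the crux of the argument is to show that, after disposing of one easy case, $T_C^{2p+1} \subseteq T$. That inclusion transfers the size bound from $T_C^{2p+1}$ to $T$ and lets Lemma~\ref{min-degree 1} finish the job.

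First I would dispose of the easy case using Lemma~\ref{red degree T^{2p+1}}: if any $t \in T_C^{2p+1}$ satisfies $\text{blue-}d_{Z_C}(t) \geq n+1$, then that lemma immediately produces a blue $S_c(n,2)$ and we are done. Hence we may assume every $t \in T_C^{2p+1}$ has $\text{blue-}d_{Z_C}(t) \leq n$. Recalling that it suffices to work in $K_r$ with $r = n + 3p + 3$ and that $|C| = 2p+2$, we have $|Z_C| = n + p + 1$, so each such $t$ satisfies $\text{red-}d_{Z_C}(t) \geq (n+p+1) - n = p+1$. This is exactly the defining condition for membership in $T$, so $T_C^{2p+1} \subseteq T$ and therefore $|T| \geq |T_C^{2p+1}| \geq p+2$.

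At this point the hypotheses of Lemma~\ref{min-degree 1} are all in place: $C$ is a blue cycle of length $2p+2$, $|S_C| \geq p+1$, the set $T$ satisfies $|T| \geq p+2$, and there is a vertex $c \in C$ with $\text{blue-}d_{Z_C}(c) \geq n+1$. Invoking Lemma~\ref{min-degree 1} then yields a monochromatic $S_c(n,2)$, completing the argument.

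I do not expect a genuine obstacle here, as the corollary is essentially a repackaging of Lemma~\ref{min-degree 1}; the work is entirely bookkeeping. The two points requiring care are computing $|Z_C| = n + p + 1$ correctly so that the blue-degree bound $\text{blue-}d_{Z_C}(t) \leq n$ converts to the red-degree bound $\text{red-}d_{Z_C}(t) \geq p+1$ needed for membership in $T$, and checking that this value of $|Z_C|$ lies in the range (roughly $p+1 \leq |Z_C| \leq 2p$) that Lemma~\ref{min-degree 1} implicitly relies on through its applications of Lemma~\ref{Jackson - 1}; both hold precisely because of the standing assumption $n \leq p-2$ (together with $n \geq 2$, since $m = 2$).
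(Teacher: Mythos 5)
Your proposal is correct and follows exactly the paper's own argument: apply Lemma~\ref{red degree T^{2p+1}} to dispose of the case where some $t \in T_C^{2p+1}$ has large blue degree into $Z_C$, conclude that otherwise $T_C^{2p+1} \subseteq T$ so that $|T| \geq p+2$, and invoke Lemma~\ref{min-degree 1}. Your explicit computation of $|Z_C| = n+p+1$ and the conversion of the blue-degree bound into the red-degree bound is just a spelled-out version of what the paper leaves implicit.
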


\begin{proof}
By Lemma \ref{red degree T^{2p+1}}, we may assume that every $t \in T^{2p+1}$ satisfies $\text{red-}d_{Z_C}(t) \geq p+1$. In particular, if $T = \{ c \in C : \text{ red-}d_{Z_C}(c) \geq p+1 \}$, then $|T| \geq |T_C^{2p+1}|$. Therefore if $|T_C^{2p+1}| \geq p+2$, then $|T| \geq p+2$, and so by the result of Lemma \ref{min-degree 1}, there must exist a monochromatic $S_c(n,2)$.
\end{proof}

\begin{lemma}\label{min-degree 3}
Let $C$ be a blue cycle of length $2p+2$. If $|S_C| = p+1$ and $|T_C^{2p+1}| = p+1$, then if there exists a vertex $c \in C$ such that $\text{blue-}d_{Z_C}(c) \geq n+1$, then this $2$-coloring admits a monochromatic $S_c(n,2)$.
\end{lemma}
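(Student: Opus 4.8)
The plan is to isolate the one rigid configuration that the double equality $|S_C|=|T_C^{2p+1}|=p+1$ permits, and to dispatch everything else by a single direct construction. First I would reduce to the genuinely tight case. Put $T=\{c\in C:\text{red-}d_{Z_C}(c)\ge p+1\}$ and recall that $|Z_C|=n+p+1$. By Lemma~\ref{red degree T^{2p+1}} I may assume every $t\in T_C^{2p+1}$ has $\text{blue-}d_{Z_C}(t)\le n$, i.e. $\text{red-}d_{Z_C}(t)\ge p+1$; thus $T_C^{2p+1}\subseteq T$ and $|T|\ge p+1$. If $|T|\ge p+2$, Lemma~\ref{min-degree 1} applies directly (its hypotheses $|S_C|\ge p+1$ and the existence of a blue-heavy vertex are given) and we are done. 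So I may assume $|T|=p+1$, which forces $T=T_C^{2p+1}$ and, more importantly, that each of the $p+1$ vertices of $C\setminus T_C^{2p+1}$ has $\text{red-}d_{Z_C}\le p$, hence $\text{blue-}d_{Z_C}\ge n+1$: every vertex off $T_C^{2p+1}$ is blue-heavy.

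Next I would pin down the shape of $S_C$ by a rigidity argument in the spirit of Lemma~\ref{4-neighbors}. On a cycle of length $2p+2$ a $(2p+1)$-vertex path is complementary to a $3$-vertex path, so $T_C^{2p+1}$ is exactly the set of vertices at distance $2$ from $S_C$ along $C$; equivalently $T_C^{2p+1}=N(S_C)$ in the $2$-regular ``distance-$2$'' graph $D$, which decomposes $C$ into an even and an odd cycle each of length $p+1$. The equality $|N(S_C)|=|S_C|$ in such a union of cycles forces $S_C$, within each of the two $(p+1)$-cycles, to be empty, full, or (only when $p+1$ is even) a perfect alternating class. Since $|S_C|=p+1$, this leaves precisely two shapes: (i) $S_C$ is a full parity class (all even, or all odd, vertices of $C$), with $T_C^{2p+1}=S_C$; or (ii) $p+1$ is even and $S_C$ consists of blocks of two separated by gaps of two, with $T_C^{2p+1}=C\setminus S_C$.

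The main tool is the \emph{direct blue construction}: if a blue-heavy vertex $c$ has a distance-$2$ neighbour $c'$ with $\text{blue-}d_{Z_C}(c')\ge 1$, then the $(2p+1)$-vertex arc of $C$ from $c$ to $c'$ serves as a blue link, an $n$-star at $c$ uses $n$ vertices of $B(c)\cap Z_C$, and a $2$-star at $c'$ uses the single leftover cycle vertex together with a blue $Z_C$-neighbour of $c'$; disjointness is immediate since $|B(c)\cap Z_C|\ge n+1$ and $c'$ contributes one further blue vertex. In shape~(i) the blue-heavy vertices form the opposite parity class, whose distance-$2$ neighbours are again blue-heavy, so this construction applies verbatim; in shape~(ii) it applies as soon as some vertex of $T_C^{2p+1}=C\setminus S_C$ has a blue edge into $Z_C$.

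The hard part will be the last case of shape~(ii): every vertex of $T_C^{2p+1}$ is red-adjacent to all of $Z_C$, so the bipartite graph between $T_C^{2p+1}$ and $Z_C$ is completely red. A vertex count shows this is fatal for any red $S_c(n,2)$ confined to $T_C^{2p+1}\cup Z_C$, since that set has only $n+2p+2$ vertices while $S_c(n,2)$ has $n+2p+3$; any double star must therefore use a blue-heavy vertex of $S_C$, which nudges me toward a blue construction. To make one work I would invoke the maximality of $Q$ (taking $C=Q$, as in Lemmas~\ref{S<p} and~\ref{S=p}): if $Z_C$ contained a red edge $z_az_b$, a red $2p+2$-cycle through all of $T_C^{2p+1}$ and $p+1$ vertices of $Z_C$ chosen to include $z_a$ but exclude $z_b$ would leave $z_b$ red-joined to $p+2$ of its vertices, contradicting maximality; hence $Z_C$ is a blue clique. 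I would then build a hybrid blue link: anchor an $n$-star at a blue-heavy $s_0$, run along a blue arc of $C$ of length $j\ge p+2$ to another blue-heavy vertex (available since $S_C$ has density one-half), then jump into the blue clique $Z_C$ and finish the link and the $2$-star there, the bound $j\ge p+2$ being exactly what holds the total demand on $Z_C$ to its $n+p+1$ vertices. This extremal configuration, where the structure simultaneously forbids the direct construction and forces a blue clique on $Z_C$, is the crux of the proof.
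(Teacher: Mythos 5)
Your route is genuinely different from the paper's: you classify the extremal shapes of $S_C$ via the distance-two graph on $C$ and then handle each shape by an explicit construction, whereas the paper never determines the shape of $S_C$ at all, instead splitting $C$ into $X'$ and $Y = C\setminus X'$ and playing the two forced complete bipartite graphs against each other. Your reduction to $|T|=p+1$, the rigidity classification into shapes (i) and (ii) (which does need its own short arc-and-gap count, but is correct), and the direct blue construction for shape (i) and the easy part of shape (ii) all check out. The genuine gap is in your final case. To conclude that $Z_C$ is a blue clique you invoke the maximality of $Q$ and explicitly ``take $C=Q$,'' but the lemma is stated for an \emph{arbitrary} blue $(2p+2)$-cycle, and the paper needs that generality: in the proof of Lemma~\ref{S>p}, when $|S_Q|\ge p+2$, Lemma~\ref{min-degree 3} is applied to the modified cycle $Q''$ with $|S_{Q''}|=p+1$, which is \emph{not} the maximal cycle. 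For such a $C$ the red $(2p+2)$-cycle you build between $T_C^{2p+1}$ and $p+1$ vertices of $Z_C$, together with an outside vertex of red degree $p+2$ into it, contradicts nothing, so you cannot rule out a red edge inside $Z_C$; and without the blue clique the hybrid link collapses, since the $Z_C$-portion of the link genuinely requires its internal edges to be blue (your own count $n+2p+3-j\le n+p+1$ shows at least $p-1$ link vertices must sit in $Z_C$ when $j$ is as small as $p+2$).

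The paper's own treatment of this configuration is maximality-free: taking $X'$ to be $p+1$ of the vertices that are red-complete to $Z_C$ and $Y=C\setminus X'$, a single red edge between $X'$ and $Y$ already yields a red $S_c(n,2)$ via the red complete bipartite graph between $X'$ and $Z_C$, so one may assume a blue complete bipartite graph between $X'$ and $Y$; then either a second blue-heavy vertex of $Y$ closes a blue $S_c(n,2)$ through that bipartite graph, or $Y\setminus\{y\}$ is red-complete to $Z_C$ and a red path of length $2p+1$ with both endpoints in $Z_C$ finishes the job. If you want to keep your structural approach, you must replace the maximality step with an argument of this kind; as written, your proof establishes the lemma only for the maximal cycle under the additional hypothesis $|S_Q|=p+1$, which is strictly weaker than the statement and insufficient for its use in Lemma~\ref{S>p}.
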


\begin{proof}
Suppose there existed a vertex $c_1 \in C$, with $\text{blue-}d_{Z_C}(q_1) \geq n+1$. Let $C' = C \backslash T_C^{2p+1}$. If any $c \in C'$ satisfied $\text{red-}d_{Z_C}(c) \geq p+1$, then by Lemma \ref{min-degree 1}, this $2$-coloring would admit a monochromatic $S_c(n,2)$. Therefore, we may assume $\forall c \in C'$ that $\text{red-}d_{Z_C}(c) \leq p$ and so $\text{blue-}d_{Z_C}(q) \geq n+1$.

Define $$X = \{ c_i \in C : \exists c_j \in C' \text{ such that } c_i \text{ and } c_j \text{ are endpoints of a path of length } 2p+1 \text{ on } C \}.$$ Conceptually, $X$ is the set of all the vertices on $C$ that are found by following $C$ for $2p+1$ vertices, starting from a vertex in $C'$. Note that $|X| \geq |C'| = p+1$. Suppose some $x \in X$ satisfied $\text{blue-}d_{Z_C}(x) \geq 1$, and let $c \in C'$ be the vertex found by following $C$ for $2p+1$ vertices from $x$. Then $c$ and $x$ would define a blue $S_c(n,2)$, with $n$-star at $c$, and $2$-star at $x$ (where we have used one vertex in $Z_C$, and the remaining vertex on $C$ between $x$ and $c$ to form the $2$-star). Therefore, we may assume that there is a red complete bipartite graph between $X$ and $Z_C$.

Let $X' \subseteq X$ be some subset of $p+1$ vertices of $X$, and set $Y = C \backslash X'$. Note that $|X'| = |Y| = p+1$. Since $X' \subset X$, we may assume by the argument above that there is a red complete bipartite graph between $X'$ and $Z_C$. Now suppose that some $x \in X'$ was connected to some $y \in Y$ by a red edge. Then, using the complete bipartite graph between $X'$ and $Z_C$, we may form a red $S_c(n,1)$, with the $n$-star centered at some vertex in $X' \backslash \{ x \}$, and the $1$-star centered at $x$. Using $y$, this $1$-star can be made into a $2$-star, completing the red $S_c(n,2)$. Therefore, we may assume there is a blue complete bipartite graph between $X'$ and $Y$.

Suppose that there is a vertex $y \in Y$ such that $\text{blue-}d_{Z_C}(y) \geq n+1$. Now if any vertex $y' \in Y \backslash \{ y \}$ has $\text{blue-}d_{Z_C}(y') \geq 1$, then it will complete a blue $S_c(n,2)$. This blue $S_c(n,2)$ can be formed using the blue complete bipartite graph between $X'$ and $Y$, to build a blue path of length $2p+1$ with one endpoint at $y$ and the other endpoint at $y'$; this path defines a blue $S_c(n,2)$ with $n$-star at $y$, and $2$-star at $y'$ (where we have used the remaining unused vertex in $X'$ to complete the $2$-star, if necessary). Therefore, we may assume there is a red complete bipartite graph between $Y \backslash \{ y \}$ and $Z_C$. Now this implies that $\forall z \in Z_C$, 
\begin{align*}
    \text{red-}d_Y(z) &\geq p\\
    &\geq n+2.
\end{align*}
Now using the red complete bipartite graph between $X'$ and $Z_C$, there must exist a red path of length $2p+1$ with both endpoints in $Z_C$. By the above reasoning, both endpoints have sufficient red degree to complete the red $S_c(n,2)$.

Therefore we may assume that $\forall y \in Y$, $\text{red-}d_{Z_C}(y) \geq p + 1$. Since $$p+1 \leq |Z_C| \leq 2(p+1) - 2,$$ by Lemma \ref{Jackson - 1} there must be a blue cycle of length $2p + 2$ between $Y$ and $Z_C$. This cycle contains a red path of length $2p + 1$ with both endpoints in $Z_C$. Since there is a red complete bipartite graph between $Z_C$ and $X'$, the endpoints have sufficient red degree to complete a red $S_c(n,2)$. 

\end{proof}

\begin{lemma} \label{S>p}
If $|S| \geq p+1$, and $n \leq p-2$, then every such $2$-coloring of a $K_r$ admits a monochromatic $S_c(n,2)$, for $r \geq n + 3p + 3$. 
\end{lemma}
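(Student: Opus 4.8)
The plan is to argue by contradiction: assume the $2$-coloring of $K_r$ (it suffices to take $r = n+3p+3$) admits no monochromatic $S_c(n,2)$, and derive a contradiction, working throughout with the maximal blue cycle $Q$, its primed vertex $u$, and the set $Z = V \backslash Q$ of size $n+p+1$. First I would record that by Lemma \ref{|T_C|} we have $|T^{2p+1}| \geq |S| \geq p+1$, so that the hypotheses of one of Corollary \ref{min-degree 2} (in the case $|T^{2p+1}| \geq p+2$) or Lemma \ref{min-degree 3} (in the case $|T^{2p+1}| = p+1$, which forces $|S| = p+1$, since $|T^{2p+1}| \geq |S| \geq p+1$) are always available.

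The first reduction is to eliminate every vertex of $Q$ with large blue degree into $Z$. If some $c \in Q$ satisfied $\text{blue-}d_Z(c) \geq n+1$, then directly by Corollary \ref{min-degree 2} or Lemma \ref{min-degree 3} the coloring would contain a monochromatic $S_c(n,2)$, contrary to assumption. Hence every $c \in Q$ has $\text{blue-}d_Z(c) \leq n$, and since each $c \in Q$ sends exactly $|Z| = n+p+1$ edges into $Z$, this yields the uniform lower bound $\text{red-}d_Z(c) \geq p+1$ for all $c \in Q$.

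Now the bipartite red graph between $Q$ and $Z$ has minimum degree at least $p+1$ on the $Q$-side, while $|Z| = n+p+1 \leq 2p = 2(p+1) - 2$ because $n \leq p-2$. Taking any $p+1$ vertices of $Q$ as one side, Lemma \ref{Jackson - 1} (with $k = p+1$) produces a red cycle $Q'$ of length $2p+2$ alternating between $Q$ and $Z$. The objective of the final step is to exploit this red $2p+2$-cycle, together with the surplus red degree of its $Z$-endpoints back into $Q$, either to assemble a red $S_c(n,2)$ directly---laying the link along a red path of length $2p$ extracted from $Q'$ and attaching the $n$-star and the $2$-star via red edges into the unused part of $Q$---or, failing that, to exhibit a vertex whose red degree into some red $2p+2$-cycle exceeds $|S|$, contradicting the maximality of $Q$.

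I expect this last step to be the crux. The vertex budget is extremely tight: with $|Z| = n+p+1$ exactly, a link of length $2p$ together with two star-centers and their $n+2$ leaves must be packed into $Q \cup Z$ with essentially no slack, so one cannot place both star-centers on the same side of the bipartition (the all-in-$Q$ placement falls one $Z$-vertex short, and paths in the red bipartite graph are forced to alternate). The delicate point is therefore to choose the red cycle and its two $Z$-endpoints so that those endpoints carry enough red degree into the part of $Q$ disjoint from the link to support the $n$-star and the $2$-star simultaneously; controlling these particular endpoint degrees---rather than merely the average red degree of $Z$ into $Q$, which need not reach $n+p$ once $p$ is large---is where the real work lies, and is likely resolved by a careful second application of Lemma \ref{Jackson - 1} or by invoking the maximality of $Q$ to rule out the obstructing configurations.
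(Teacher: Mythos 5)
Your first reduction is exactly the paper's: invoking Corollary \ref{min-degree 2} or Lemma \ref{min-degree 3} (after noting $|T^{2p+1}|\geq|S|\geq p+1$ via Lemma \ref{|T_C|}) to force $\text{blue-}d_Z(q)\leq n$, hence $\text{red-}d_Z(q)\geq p+1$, for every $q\in Q$, and then applying Lemma \ref{Jackson - 1} to extract a red $(2p+2)$-cycle between $Q$ and $Z$. Up to that point the proposal is sound and matches the paper.

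The gap is the final step, which you explicitly defer and which is where all the work of this lemma actually lives. The paper does not resolve it by ``a careful second application of Lemma \ref{Jackson - 1}'' in the generic way you suggest, nor by directly contradicting the maximality of $Q$. Instead it proceeds as follows: a counting argument ($\frac{(p+1)(2p+2)}{n+p+1}>p+1$) produces a vertex $z'\in Z$ with $\text{red-}d_Q(z')\geq p+2$; one then selects a set $W\subset R(z')\cap Q$ of $n+2$ vertices, arranged so that when $|S|=p+1$ the distinguished vertex $w_1$ lies outside $S$, and chooses the $(p+1)$-set $A$ for Jackson's lemma to contain $w_{n+2}$ but avoid $w_1,\dots,w_{n+1}$ \emph{and} the two $Q$-neighbours of $w_1$. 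The resulting red cycle yields a red $S_c(n+1,1)$ with its $(n+1)$-star at $z'$ and $1$-star at some $z_1\in Z\setminus\{u\}$; if this cannot be completed to a red $S_c(n,2)$, then $z_1$ is forced to be blue-adjacent to all of $Q\setminus A$, in particular to $w_1$ and both its $Q$-neighbours. One then splices $z_1$ into $Q$ in place of $w_1$ to obtain a \emph{new} blue $(2p+2)$-cycle $Q''$ with $|S_{Q''}|\geq p+1$ and with $z_1$ having blue degree $n+1$ off $Q''$, whereupon Corollary \ref{min-degree 2} or Lemma \ref{min-degree 3} applies to $Q''$ and finishes the proof. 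None of this machinery (the vertex $z'$, the tailored choice of $W$ and $A$, and above all the cycle-exchange that re-enters the min-degree lemmas) appears in your proposal, so as written it does not constitute a proof. As a minor point, your parenthetical claim that the two star-centers cannot lie on the same side of a bipartition is backwards: since the link of $S_{2p+1}(n,2)$ has even length $2p$, in any bipartite embedding the two centers necessarily lie in the \emph{same} part.
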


\begin{proof}
Clearly it is enough to prove the lemma for $r = n + 3p + 3$. If $|S| \geq p+1$, then by the results of Corollary \ref{min-degree 2} and Lemma \ref{min-degree 3}, we may assume that $\forall q \in Q$, $\text{blue-}d_Z(q) \leq n$ and so $\text{red-}d_Z(q) \geq p+1$. Since $$\frac{(p+1)(2p+2)}{n + p + 1} > p + 1,$$ there must be a vertex $z' \in Z$ such that $\text{red-}d_Q(z') \geq p + 2$. We shall now consider two cases based on the size of $|S|$.

\begin{itemize}
    \item If $|S| = p+1$, then let $W \subset Q$ be a subset of $R(z') \cap Q$ of size $n+2$, such that there exists a vertex $w_1 \in W$ that is not in $S$. Such a subset must exist since $|R(z') \cap Q| \geq p+2$, but $|S| = p+1$. In particular, let $W = \{ w_1, w_2, \ldots, w_{n+1}, w_{n+2} \}$, where $w_1 \notin S$. Finally, let $w_{n+2}$ be chosen so that $w_{n+2}$ is not adjacent to $w_1$ on $Q$.
    \item If $|S| \geq p+2$, then let $W \subset Q$ be any subset of $R(z') \cap Q$ of size $n+2$, where as above, $W = \{ w_1, w_2, \ldots, w_{n+1}, w_{n+2} \}$. Similarly, choose $w_{n+2}$ so that it is not adjacent to $w_1$ on $Q$.
\end{itemize}

Given the above definitions of $W$, now choose a subset of $p+1$ vertices of $Q$ including $w_{n+2}$, but excluding $w_{1}, \ldots, w_{n+1}$. Furthermore, choose these $p+1$ vertices so that neither $w_1$, nor the two vertices adjacent to $w_1$ on $Q$ are chosen. If $w_1 = q_i$, then removing $q_{i-1}$, $q_i$, $q_{i+1}$, and $W \backslash \{ w_{n+2} \}$ from consideration leaves at least 
\begin{align*}
    2p + 2 - (n + 1) - 2 &= 2p - n - 1\\ 
    &\geq p + 1
\end{align*} vertices to choose from. Call this set of $p+1$ chosen vertices $A$. Since $A \subset Q$, it follows that $\forall a \in A$, $\text{red-}d_Z(a) \geq p+1$. Since $$p+1 \leq |Z| \leq 2(p+1) - 2,$$ by Lemma \ref{Jackson - 1} there must be a red cycle of length $2p+2$ between $A$ and $Z$, call this red cycle $Q'$.

Now, either $z' \in Q'$ or $z' \notin Q'$. First, suppose that $z' \in Q'$. Then following $Q'$ for $2p+1$ vertices from $z'$ ends at some other $z_1 \in Z$, where $z_1 \neq u$. Note that such a path always exists since $Q'$ can be followed in two directions; if following $Q'$ for $2p+1$ vertices from $z'$ in the ``forward" direction ends at $u$, then simply follow $Q'$ in the ``backward" direction instead. Since $Q'$ contains $w_{n+2}$, and avoids $w_1, \ldots w_{n+1}$, this path defines a red $S_c(n+1, 1)$, with the $1$-star at $z_1 \in Z \backslash \{ u \}$, and the $n+1$-star centered at $z'$. Alternatively suppose that $z' \notin Q'$. Then following $Q'$ for $2p$ vertices from $w_{n+2}$ will end at some $z_1 \in Z$, where again the direction is chosen so that $z_1 \neq u$. Appending $z'$ to $w_{n+2}$ will again result in a red path of length $2p+1$ between $A$ and $Z$, with one endpoint at $z'$ and the other endpoint at $z_1$. As before, since $Q'$ contains $w_{n+2}$, and avoids $w_1, \ldots w_{n+1}$, this path defines a red $S_c(n+1, 1)$. In either case, there is the desired $S_{c}(n+1, 1)$, with $n$-star at $z'$, and $1$-star at $z_1$. Now observe that this red $S_{2p + 1}(n+1, 1)$ occupies $p + 1$ vertices in $Z$. If $z_1$ is connected to any of the remaining $n$ vertices in $Z$ by a red edge, it will complete a red $S_c(n,2)$. Furthermore, if $z_1$ is connected to any of the vertices in $Q \backslash A$ by a red edge, it will also complete a red $S_{c}(n,2)$. Therefore, $z_1$ must be connected to every vertex in $Q \backslash A$ by a blue edge. Since $\{ q_{i-1}, q_i, q_{i+1} \} \subset A$, this implies that $z_1$ must be connected to every $\{ q_{i-1}, q_i, q_{i+1} \}$ by blue edges (where recall $q_i = w_1$).

Let $$Q'' = \{ q_1, q_2, \ldots, q_{i-1}, z, q_{i+1}, \ldots, q_{2p+1}, q_{2p+2} \}.$$ If $|S| \geq p + 2$, then this is a blue cycle of length $2p + 2$ with $|S_{Q''}| \geq p+1$ (since $z \neq u$, and removing $q_i$ removes at most one vertex that belongs to $S$). Also, $z$  has blue degree $n + 1$ off the cycle, since it is connected to $n$ unused vertices in $Z$ by a blue edge, and also $q_i$ which is no longer on the cycle. Now by either Corollary \ref{min-degree 2} or Lemma \ref{min-degree 3} there must be a monochromatic $S_c(n,2)$. Alternatively if $|S| = p+1$, then in this case recall that $q_i$ was chosen specifically such that $q_i \notin S$, and so replacing $q_i$ with $z$ does not remove any vertices in $S$ from Q''. Now by Lemma \ref{min-degree 3} there must be a monochromatic $S_c(n,2)$.
\end{proof}

Combining the results of Lemma \ref{S<p}, Lemma \ref{S=p}, and Lemma \ref{S>p}, the proof of Theorem \ref{upper bound 2} is complete.

\section{Unsolved Problems and Further Results} \label{Unsolved Problems}

Here we have proved the ramsey numbers of odd-linked double stars in all cases where $n \geq c$, and $n \leq \lfloor \frac{c}{2} \rfloor - 2$, $m = 2$. However, our work leaves open the result for all other cases with $n < c$. We possess partial results in some other cases with $n < c$, but most cases remain open.

Beyond the remaining cases of odd-linked double stars, this result more generally prompts the question of the ramsey numbers of the even-linked double stars. Surprisingly, the ramsey numbers of even-linked double stars have proved to be significantly more challenging to determine than odd-linked double stars. There are, however, some results for specific cases of even-linked double stars:

\begin{itemize}
    \item First we simply note that if $c = 2p$ the ramsey numbers of $S_{c}(n,0)$ or $S_c(n,1)$ are solved exactly by the result of Lemma \ref{Yu_Li}.
    \item The ramsey numbers of $S_4(n,m)$ (example in Figure 2 below) were solved by Burr and Erd\H{o}s to be $$r(S_4(n,m)) = \text{max}(2n + 3, n + 2m + 5).$$ See in \cite{Burr and Erdos} for detailed proof.
    
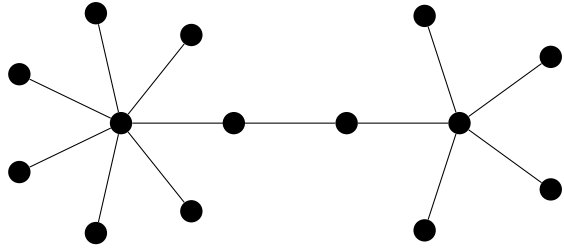
\begin{figure}[ht]
\centering
\begin{center}
\begin{tikzpicture}[transform shape,scale = 0.5, every node/.style={scale=0.7}]
    \node[circle,fill,inner sep=0.3cm] (center) at (0,0) {};
\foreach \phi in {1,...,6}{
    \node[circle,fill,inner sep=0.3cm]  (v_\phi) at (360/7  * \phi:3cm) {};
         \draw[black] (v_\phi) -- (center);
      }
    \node[circle,fill,inner sep=0.3cm] (center3) at (3,0) {};
    \node[circle,fill,inner sep=0.3cm] (center4) at (9,0) {};
    \foreach \phi in {1,...,5}{
    \node[shift = {(90/7,0)}, circle,fill,inner sep=0.3cm]  (v1_\phi) at (36 + 360/5  * \phi:3cm) {};
         \draw[black] (v1_\phi) -- (center4);
      }
    \draw[black] (center) -- (v1_2);
   \end{tikzpicture} 
\end{center}
\caption{Example of a $S_4(6,4)$}
\end{figure}

    \item Finally, the ramsey numbers of $S_2(n,m)$ (example in Figure 3 below) were solved, in most cases, by Grossman et. Al. In particular, they show that for all $n \leq \sqrt{2}m$ and $n \geq 3m$, the ramsey numbers of $S_2(n,m)$ satisfy

\[
    r(S_2(n,m)) = \left\{\begin{array}{lr}
        \text{max}(n + 2m + 1, 2n + 2), & \text{for } n \text{ odd}, m \leq 2\\
        \text{max}(n + 2m + 2, 2n + 2), & \text{otherwise}
        \end{array}\right\}.
  \]
  
  See \cite{GHK} for detailed proof. It was conjectured in \cite{GHK} that the above result holds in the range $\sqrt{2}m < n < 3m$ as well. Surprisingly however, Norin et. Al showed in \cite{Norin} that the conjecture is false, as well as providing asymptotic results for the ramsey numbers of $S_2(n,m)$ in the gap. The exact value of $r(S_2(n,m))$ in the range $\sqrt{2}m < n < 3m$, however, remains an open question.
 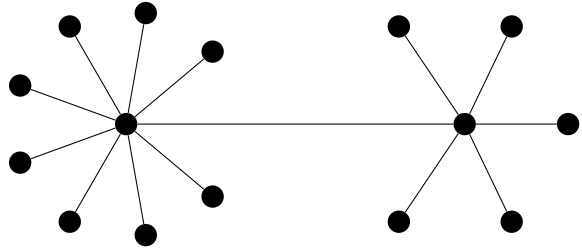
\begin{figure}[ht]
\centering
\begin{center}
\begin{tikzpicture}[transform shape,scale = 0.5, every node/.style={scale=0.7}]
    \node[circle,fill,inner sep=0.3cm] (center) at (0,0) {};
\foreach \phi in {1,...,8}{
    \node[circle,fill,inner sep=0.3cm]  (v_\phi) at (360/9  * \phi:3cm) {};
         \draw[black] (v_\phi) -- (center);
      }
    \node[circle,fill,inner sep=0.3cm] (center4) at (9,0) {};
    \foreach \phi in {1,...,5}{
    \node[shift = {(12.5,0)}, circle,fill,inner sep=0.3cm]  (v1_\phi) at (180 + 360/6  * \phi:3cm) {};
         \draw[black] (v1_\phi) -- (center4);
      }
    \draw[black] (center) -- (center4);
   \end{tikzpicture} 
\end{center}
\caption{Example of a $S_2(8,5)$}
\end{figure}
  
\end{itemize}

We generally ask then, for either $c \geq 6$ or $c = 2$, what are the ramsey numbers of the \textit{even-linked double stars}?

\bibliographystyle{unsrt}  


\end{document}